\theoremstyle{plain}
\newtheorem{thm}{Theorem}[section]
\newtheorem{lem}[thm]{Lemma}
\newtheorem{cor}[thm]{Corollary}
\theoremstyle{definition}
\newtheorem{remark}[thm]{Remark}
\definecolor{darkpastelgreen}{rgb}{0.01, 0.75, 0.24}
\begin{document}
\dedicatory{Dedicated to Sibe Marde\v{s}i\'{c} (1927-2016)}
\title{New Techniques for Computing Geometric Index}

\author[K. B. Andrist]{Kathryn B. Andrist}
\address{Mathematics Department, Utah Valley University,
Orem, Utah 84058}
\email{kathy.andrist@uvu.edu }
\urladdr{http://www.uvu.edu/profpages/profiles/show/user\_id/146}
\author[D. J. Garity]{Dennis J. Garity}
\address{Mathematics Department, Oregon State University,
Corvallis, OR 97331, U.S.A.}
\email{garity@math.oregonstate.edu}
\urladdr{http://www.math.oregonstate.edu/\symbol{126}garity}
\author[D. D. Repov\v{s}]{Du\v{s}an D. Repov\v{s}}
\address{%
Faculty of Education,
and Faculty of Mathematics and Physics,
University of Ljubljana\\
Ljubljana,  SI-1000, Slovenia}
\email{dusan.repovs@guest.arnes.si}
\urladdr{http://www.fmf.uni-lj.si/\symbol{126}repovs}
\author[D. G. Wright]{David G. Wright}
\address{Department of Mathematics, Brigham Young University,
Provo, UT 84602, U.S.A.}
\email{wright@math.byu.edu}
\urladdr{http://www.math.byu.edu/\symbol{126}wright}

\date{\today}

\subjclass[2010]{Primary 57M25, 54E45, 54F65; Secondary 57N75, 57M30, 57N10}

\keywords{algebraic index, geometric index, Whitehead link, Bing link, McMillan link, Gabai link, Antoine link}

\begin{abstract}
We introduce new {general}  techniques for computing the geometric index of a link $L$ in the interior of a solid torus $T$. These techniques simplify and unify previous ad hoc methods used to compute the geometric index in specific examples { and allow a simple computation of geometric index for new examples where the index was not previously known}. The geometric index measures the minimum number of times any meridional disc of $T$ must  intersect $L$. It is related to the algebraic index in the sense that adding up signed intersections of an interior simple closed curve $C$  in $T$ with a meridional disc gives $\pm$ the algebraic index of $C$  in $T$. One key idea is introducing the notion of geometric index for solid chambers of the form $B^2\times I$ in $T$. We prove that if a solid torus can be divided into solid chambers by meridional discs in a specific {(and often easy to obtain)} way, then the geometric index can be easily computed. \end{abstract}
\maketitle

\section{Introduction}\label{Intro Section}
The geometric index of a link $L$ in the interior of a solid torus $T$ measures the minimum number of times any meridional disc of $T$ must  intersect $L$.  The geometric index of $L$ in $T$ often seems intuitively obvious, but it is  surprisingly difficult to prove that intuition corresponds to the actual index. 
See for example the links in Figures {\ref{Complicated Links}, \ref{Index2Fig}, and \ref{Additional Links}}. Various ad hoc methods were used to compute the geometric index of the links in {Figure \ref{Additional Links} }in \cite{AW00} and \cite{GRW17}, but these methods {do} not apply to links in general. The method described in this paper encompasses the ad hoc arguments used in those papers and applies {more generally} to links in a solid torus.

In this paper we introduce new techniques for computing the geometric index of $L$ in  $T$. The strategy is to break up  the solid torus into chambers $C_i$ of the form $B^2\times I$ by a carefully selected collection of meridional discs, and to analyze the part of the link $L$ in each $C_i$. The chambers can in practice be chosen so that all but two of the segments in $C_i$ meet each end of $C_i$ in a single point.
{This can be done by isolating turning points or pairs of turning points in components of  $L$ in separate chambers. This method holds for the links in the literature that we have examined.} Each meridional disc of $C_i$ must intersect such a segment. This reduces the analysis in each such chamber to determining whether an arc with both endpoints in one end of the chamber links an arc with both endpoints in the other end. To make this precise, we define the geometric index of $L\cap C_i$ in $C_i$. If each such geometric index is $n$, and if each meridional disc in the chosen collection intersects $L$ transversely in $n$ points, then we show that the geometric index of $L$ in $T$ is $n$.

This is stated precisely in Theorem \ref{Main Theorem}. The three corollaries that follow from this theorem give various methods for computing the geometric index from information about the chambers. {One might think a simpler result could be obtained by not requiring that each meridional disc in the collection intersects $L$ in exactly $n$ points, and that one could simply take the minimum geometric index of the link in the collection of chambers. The example in Figure \ref{Index02Fig} shows that this is not the case.}

\begin{figure}[ht]
\begin{center}
  \subfigure[Index 2]%
    {%
    \label{Index2}
    \includegraphics[width=.35\textwidth]{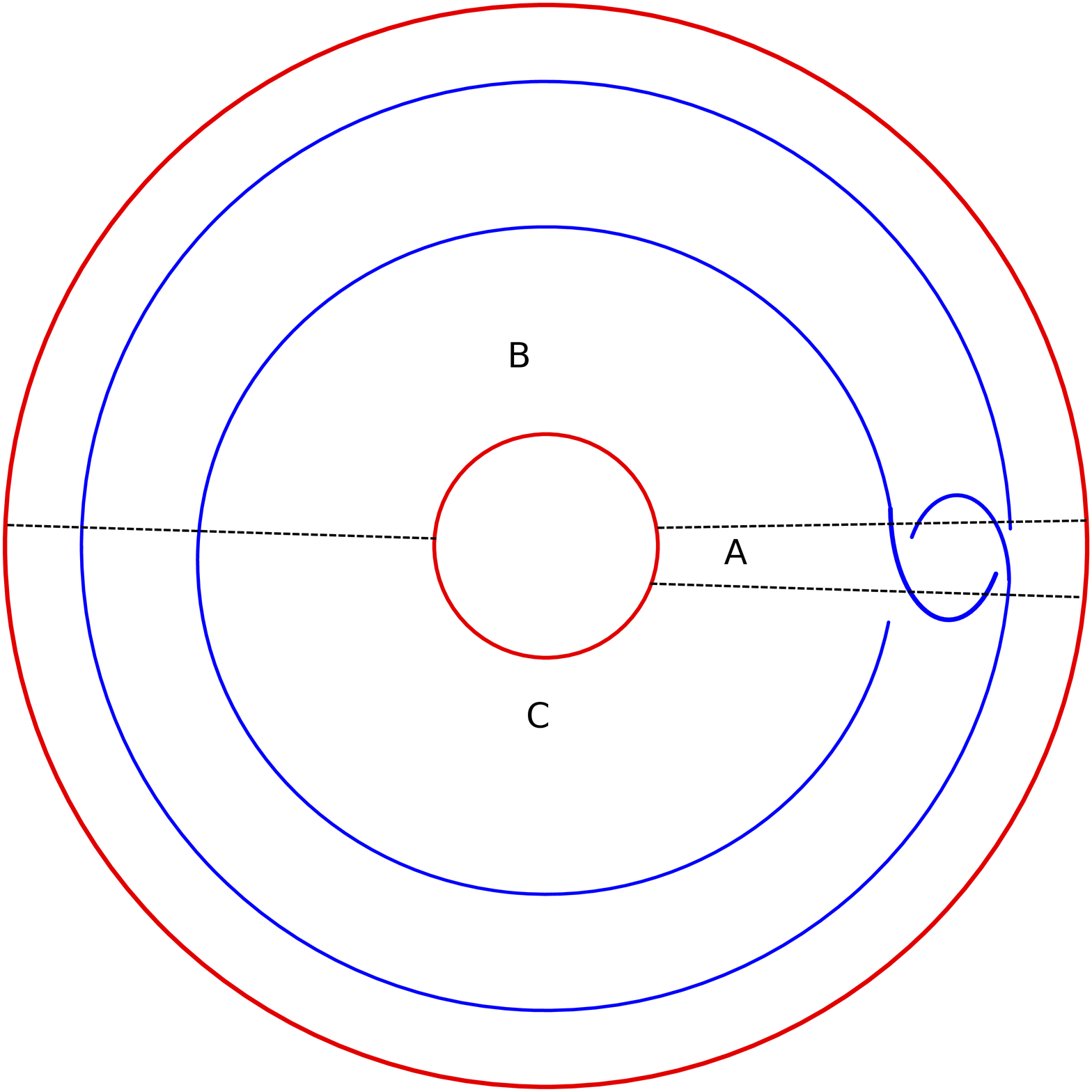}
    }%
  \subfigure[Index 0]%
    {%
    \label{Index0}
    \includegraphics[width=0.35\textwidth]{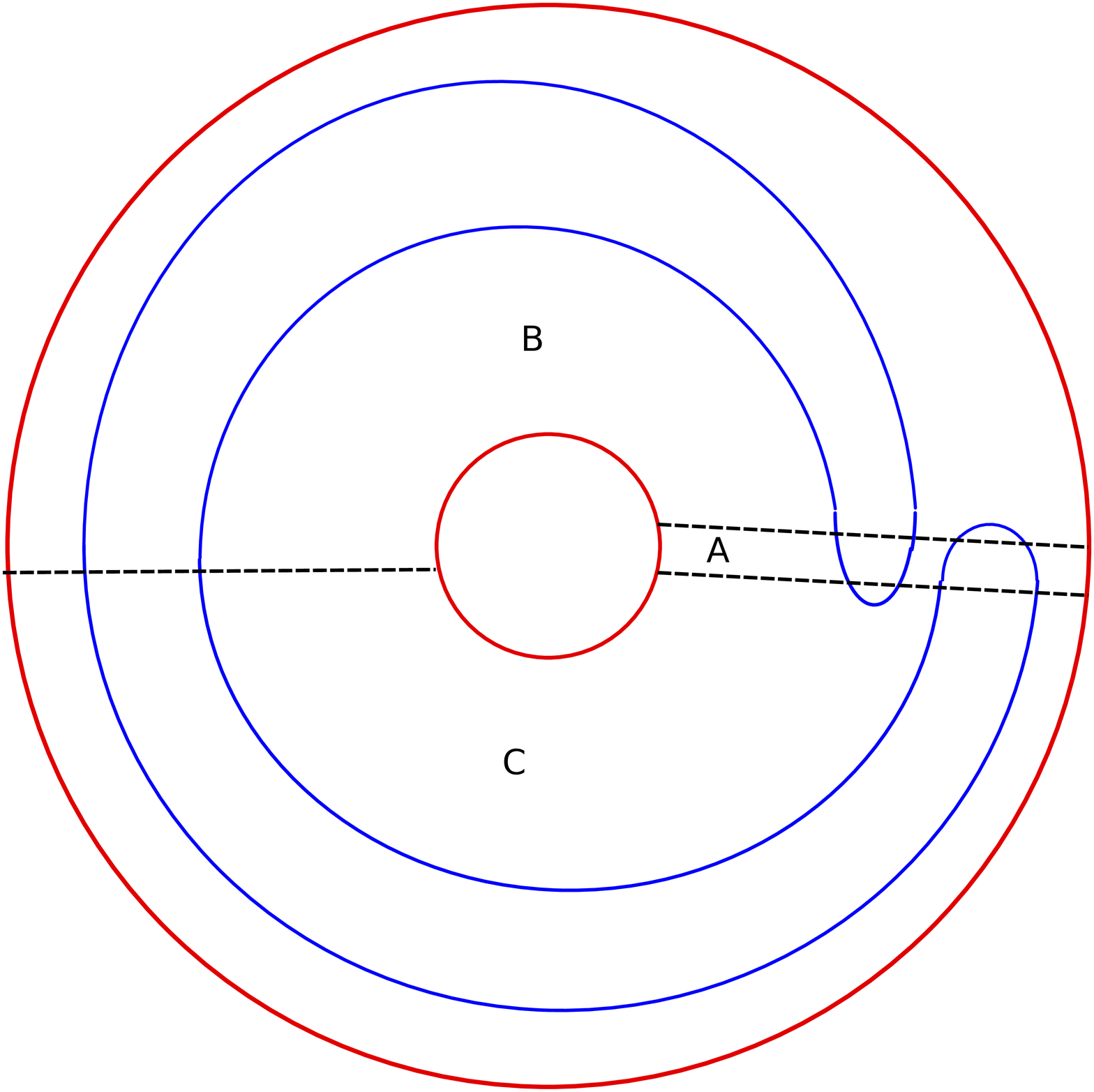}
    }  \end{center}
  \caption{%
    Chambers with Same Indices}%
  \label{Index02Fig}
\end{figure}

{In each subfigure, Chamber A has geometric index 4 and Chambers B and C have geometric index 2. But the overall geometric indices of the two examples differ.}

A {specific case} of this result for certain {special} links in the setting of only one meridional disc appears in  \cite{AW00}. Another {specific simple} case  of this result for one {specialized} type of link and two meridional discs appears in  \cite{GRW17}. 
Section \ref{Definition Section} contains terminology, definitions and basic results about geometric and algebraic indices. This section also indicates how geometric index computations can be extremely useful in proving certain geometric results.  Section \ref{Chamber Section} defines geometric index for chambers of the form $B^2\times I$ and computes the geometric index in a few special cases. Section \ref{Main Section} contains the proof of Theorem \ref{Main Theorem} and three corollaries. Section \ref{Application Section} contains {examples of} applications of the main results.

\section{Definitions and Results about Geometric and Algebraic Indices}
\label{Definition Section}

If $S$ is a solid torus embedded in another solid torus $T$, the \emph{algebraic index} of $S$ in $T$, $a(S,T)$, is defined to be $\vert \alpha \vert$, where $\alpha$ is the integer in $H_{1}(T)$ represented by the center line of $S$. Algebraic index is multiplicative, so that if
$S\subset T\subset U$ are solid tori, the algebraic index of $S$ in $U$ is the product of the algebraic index of $S$ in $T$ with the algebraic index of $T$ in $U$. Note that the algebraic index of a Whitehead link in the torus containing it is $0$, as is the algebraic index of each component of a Bing link. See Figure \ref{Index2Fig}.

Schubert introduced the notion of geometric index in \cite{Sch53}. If $K$ is a link in the interior of a solid torus $T$, then we denote the \emph {geometric index} of $K$ in $T$ by $N(K,T)$.  The geometric index is the minimum, over all meridional discs $D$ of $T$,  of $|K \cap D|$.  A \emph {core} of a solid torus $T$ in 3-space is a simple closed curve $J$ such that $T$ is a regular neighborhood of $J$.  Likewise, a core for a finite union of disjoint solid tori is a link consisting of one core of each of the solid tori.   If $T$ is a solid torus and $M$ is a finite union of disjoint solid tori so that $M \subset  Int  \ T$, then the geometric index $N( M,T)$ of $M$ in  $T$ is $N(K,T)$, where $K$ is a core of $M$.  

{Figure \ref{Complicated Links} indicates a collection of new examples. Each circled part of the inner link in Figure \ref{Complicated} can be replaced by one of the patterns in  Figure \ref{Strands} to produce many different examples. These examples do not fit into any previous class of examples such as the Gabai or McMillan links in \cite{GRW17}. More complicated links can also be obtained by additional varying of the linking or winding pattern of the strands as one progresses out from the center of the diagram.}

{It is clear that the algebraic index of the inner link  in Figure \ref{Complicated} is 0. Our techniques show that the geometric index is $8$ for each example. See Section \ref{Application Section}.}

\begin{figure}[ht]
\begin{center}
  \subfigure[Complicated Link(s)]%
    {%
    \label{Complicated}
    \includegraphics[width=0.7\textwidth]{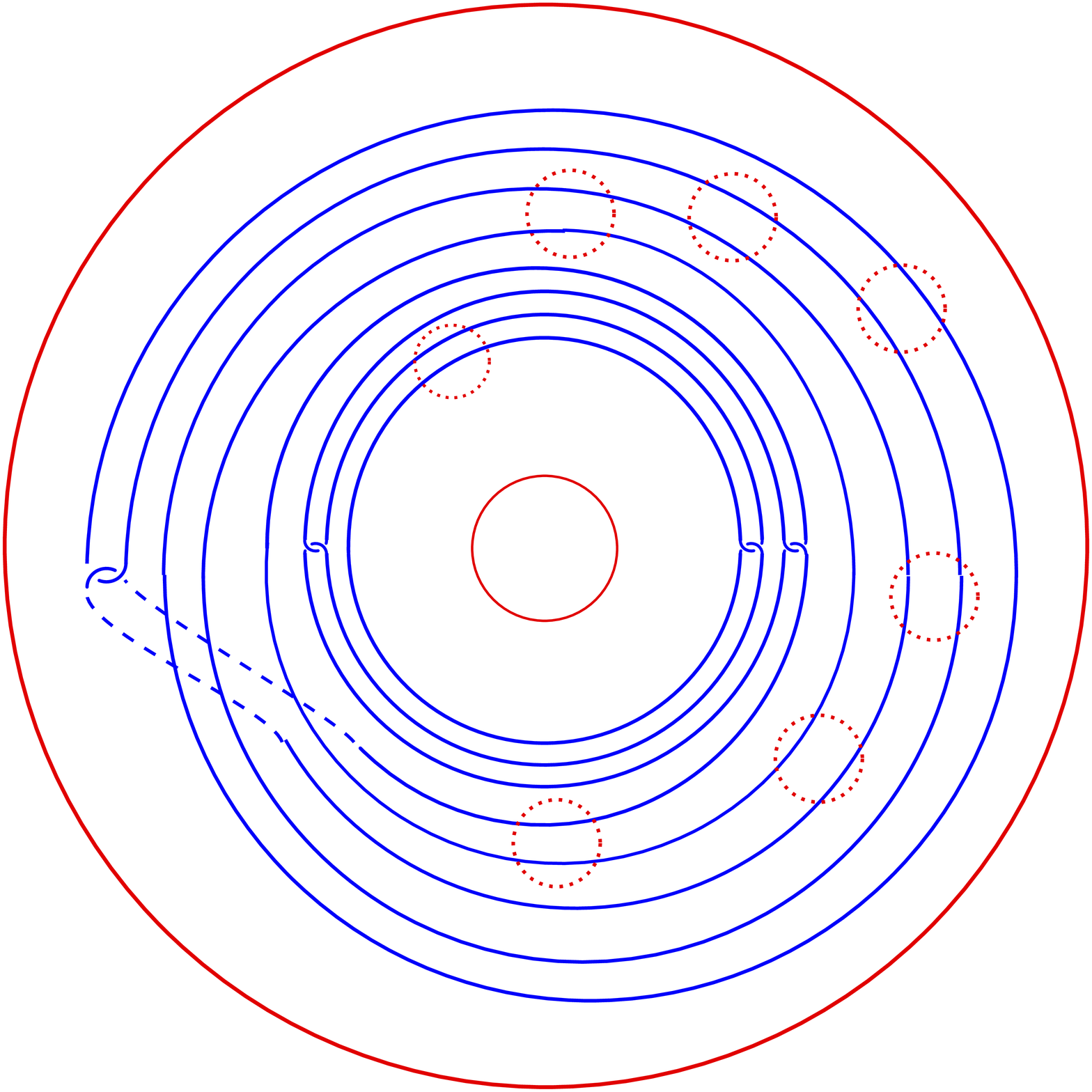}
    }\\%
  \subfigure[Replacement Strands]%
    {%
    \label{Strands}
    \includegraphics[width=0.6\textwidth]{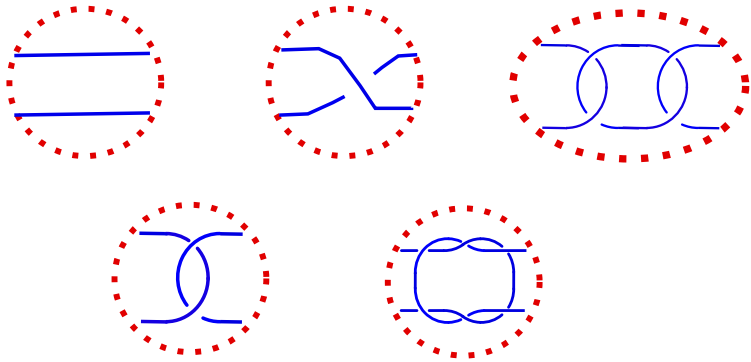}
    }%
  \end{center}
  \caption{%
    Some Complicated Links}%
  \label{Complicated Links}
\end{figure}

The geometric indices of the links in Figure \ref{Index2Fig} are all two.
The algebraic index of the Whitehead Link in Figure \ref{Whitehead} is zero as is the algebraic index of each component in Figure \ref{Bing} and Figure \ref{Antoine}. The algebraic index of the link in Figure \ref{Algebraic2} is two. The geometric indices of the links in Figure \ref{Additional Links} are indicated in the figure. More details will be provided in Section \ref{Application Section}. See \cite{GRW17} for more discussion of the Gabai and McMillan links in Figure \ref{Additional Links}.

\begin{figure}[ht]
\begin{center}
  \subfigure[Whitehead Link]%
    {%
    \label{Whitehead}
    \includegraphics[height=.4\textwidth]{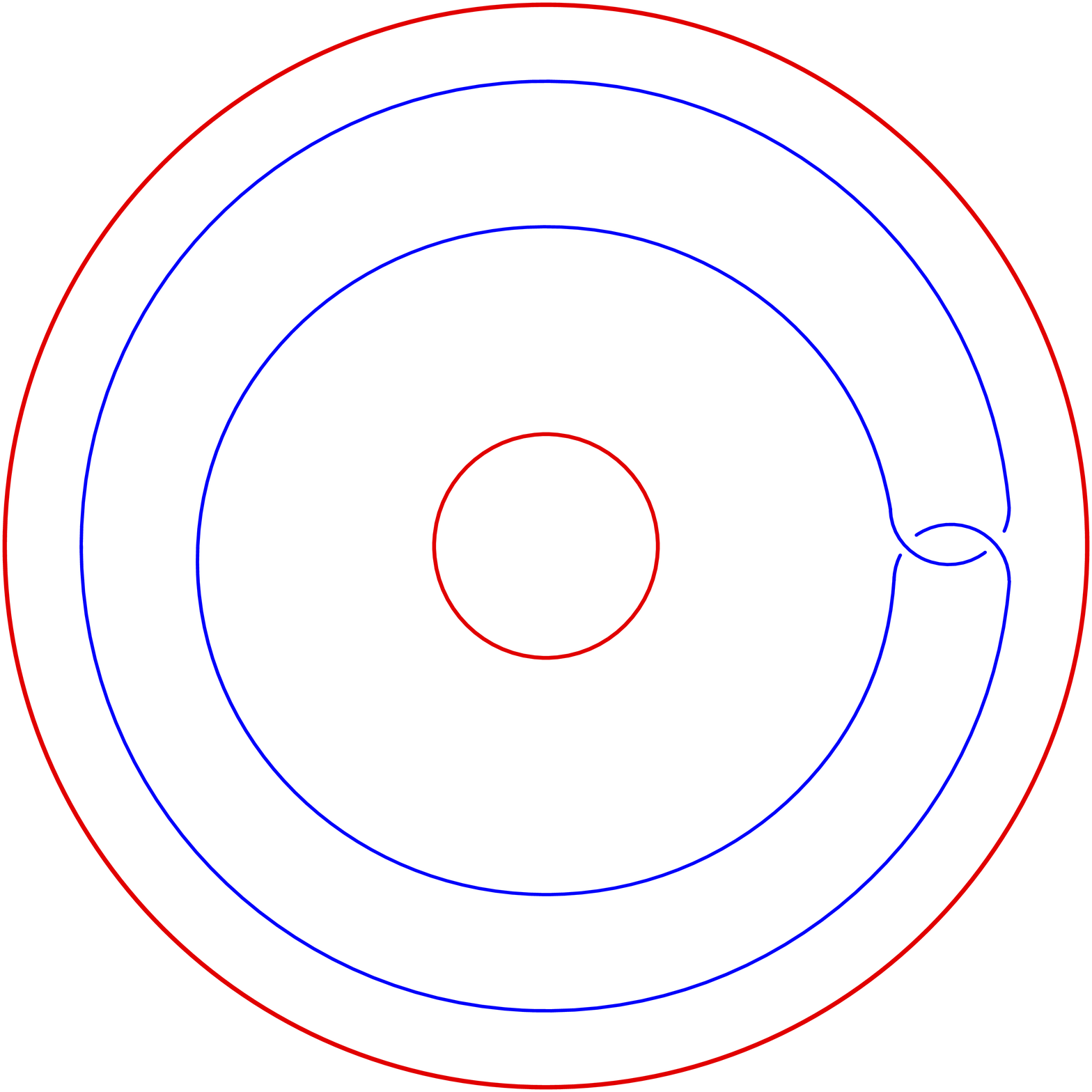}
    }%
  \subfigure[Bing Link]%
    {%
    \label{Bing}
    \includegraphics[width=0.4\textwidth]{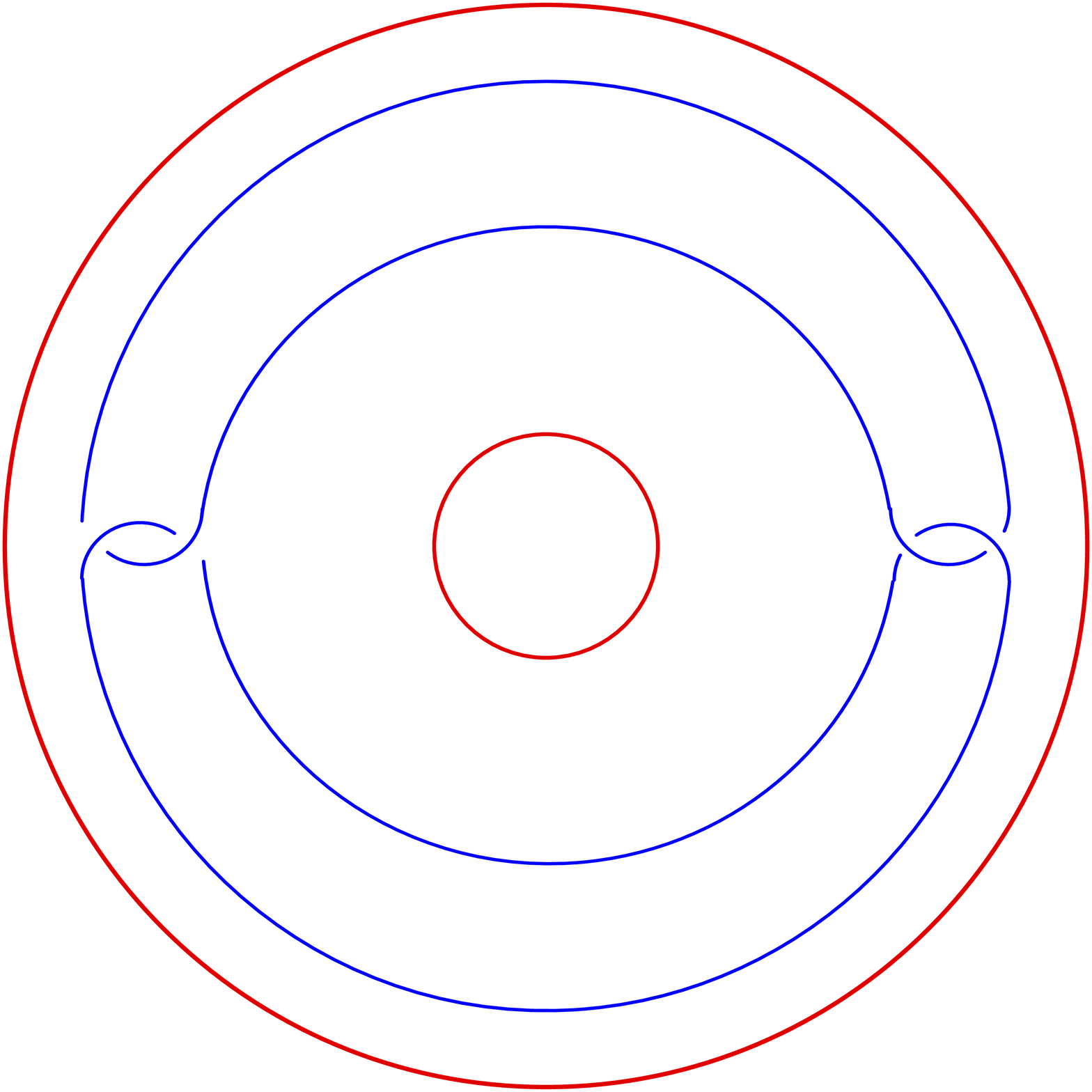}
    }\\%
  \subfigure[Antoine Link]%
    {%
    \label{Antoine}
    \includegraphics[width=0.4\textwidth]{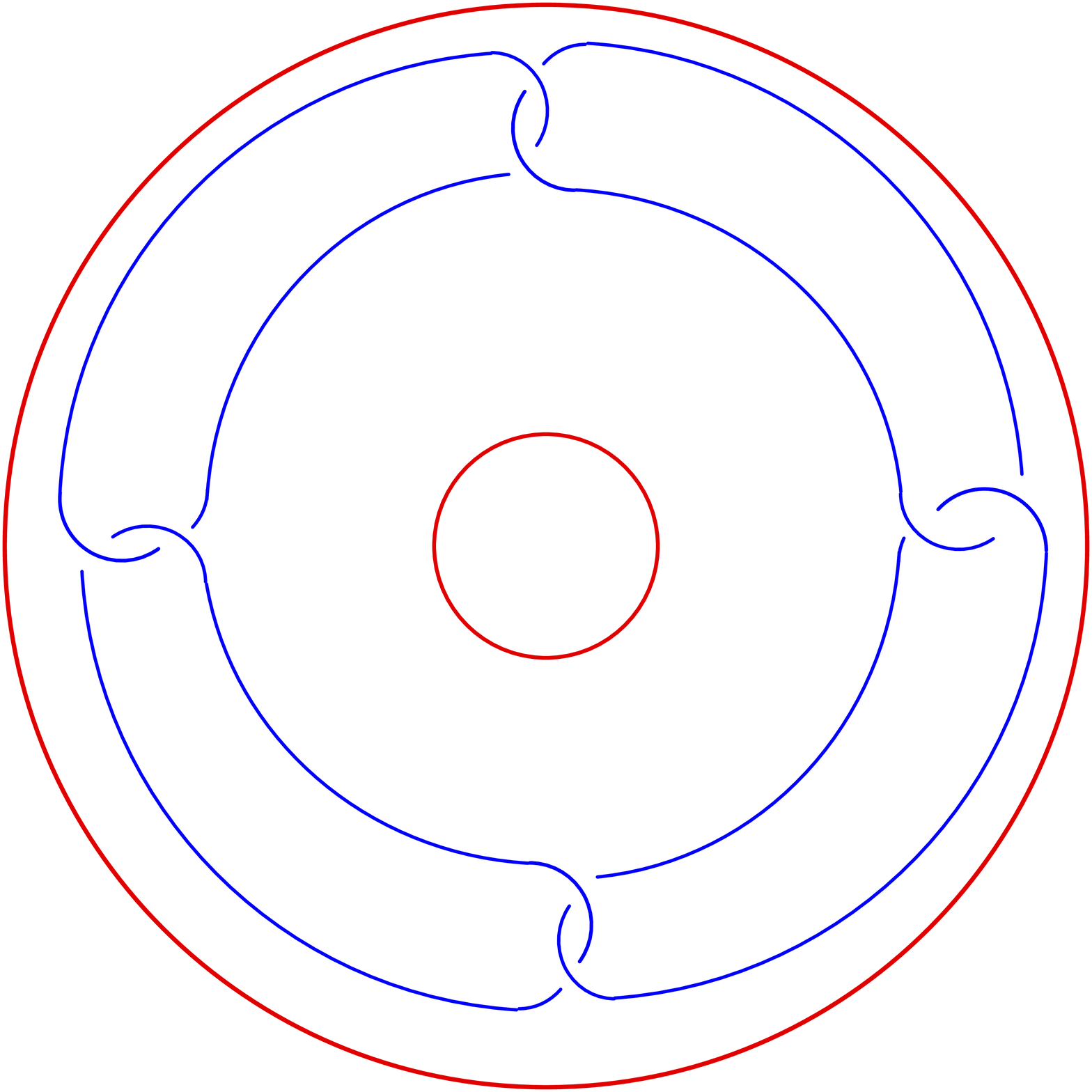}
    }%
  \subfigure[Algebraic Index 2]%
    {%
    \label{Algebraic2}
    \includegraphics[width=0.4\textwidth]{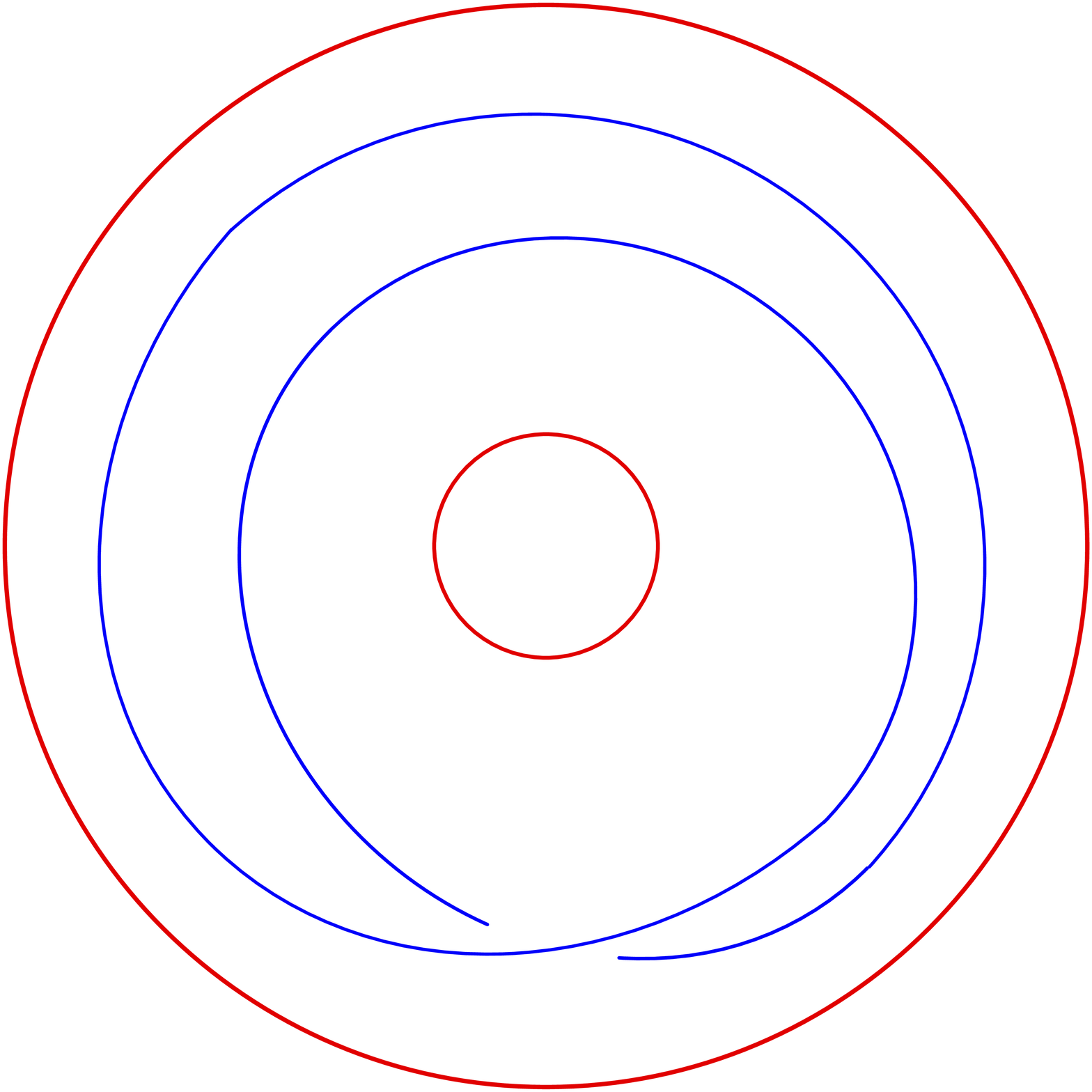}
    }%
  \end{center}
  \caption{%
    Links with Geometric Index 2}%
  \label{Index2Fig}
\end{figure}

\begin{remark}\label{algebraic computation}
It is well known that the algebraic index of a simple closed curve $J$ in a solid torus $T$ can be computed as follows. Choose a meridional disc $D$ that intersects $J$ transversely. Assign a positive and a negative side to $D$ in $T$ so that a simple closed curve passing through $D$ once, going from the negative to the positive side, has algebraic index $1$ in $T$. Assign a $\pm 1$ to each intersection of $J$ with $D$ corresponding to passing from the negative to the positive or from the positive to the negative side. Then the algebraic index of $J$ in $T$ is the absolute value of the sum of the numbers assigned to each intersection of $J$ with $D$. {In particular, the algebraic and geometric indices have the same parity.}
\end{remark}

\begin{remark} Note that the previous remark immediately implies that the geometric index is greater than or equal to the algebraic index.
\end{remark}

\begin{figure}[h]
\begin{center}
  \subfigure[Knotted Link - Index 3]%
    {%
    \label{Knotted}
    \includegraphics[width=0.33\textwidth]{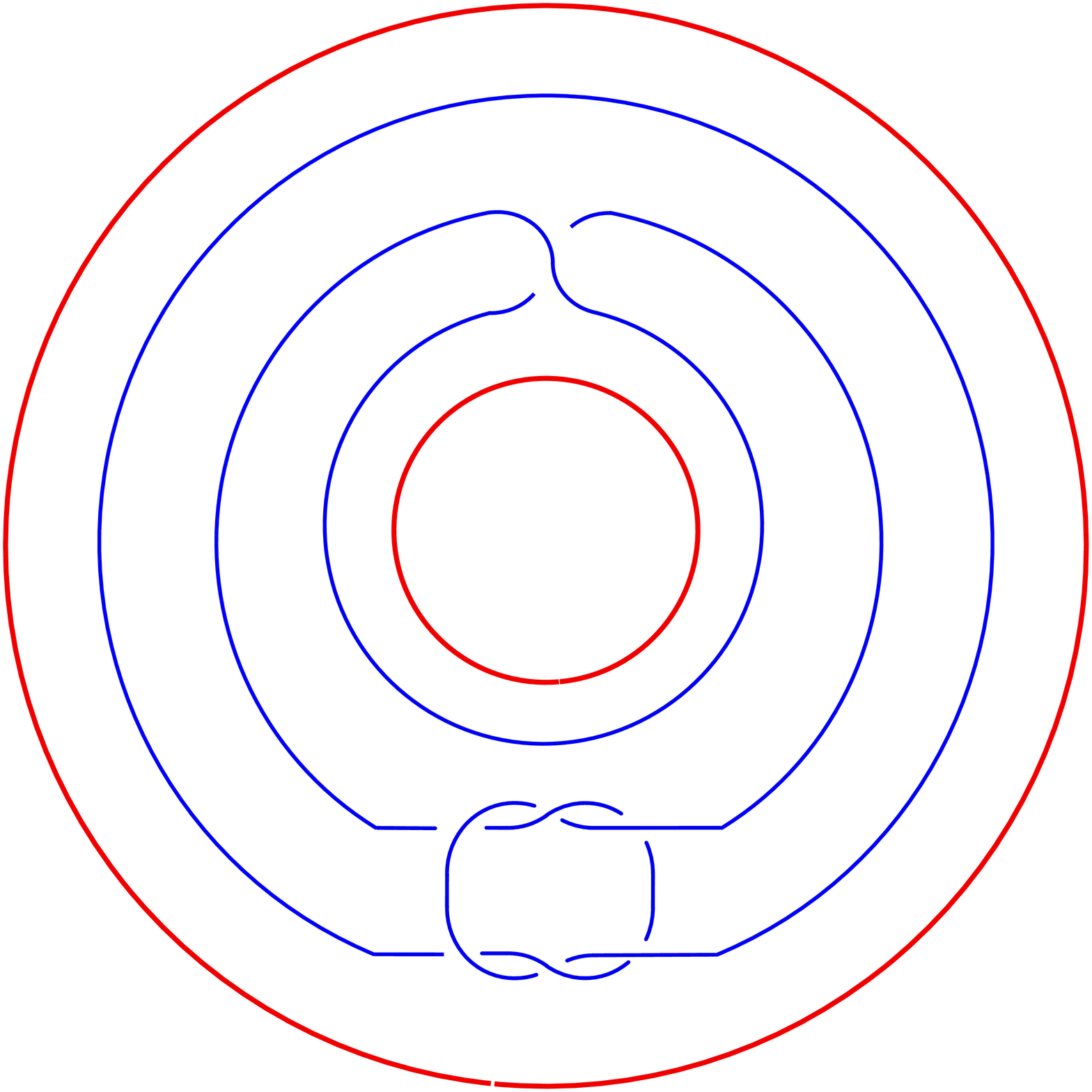}
    }
  \subfigure[McMillan Link - Index 4]%
    {%
    \label{McM4}
    \includegraphics[width=0.33\textwidth]{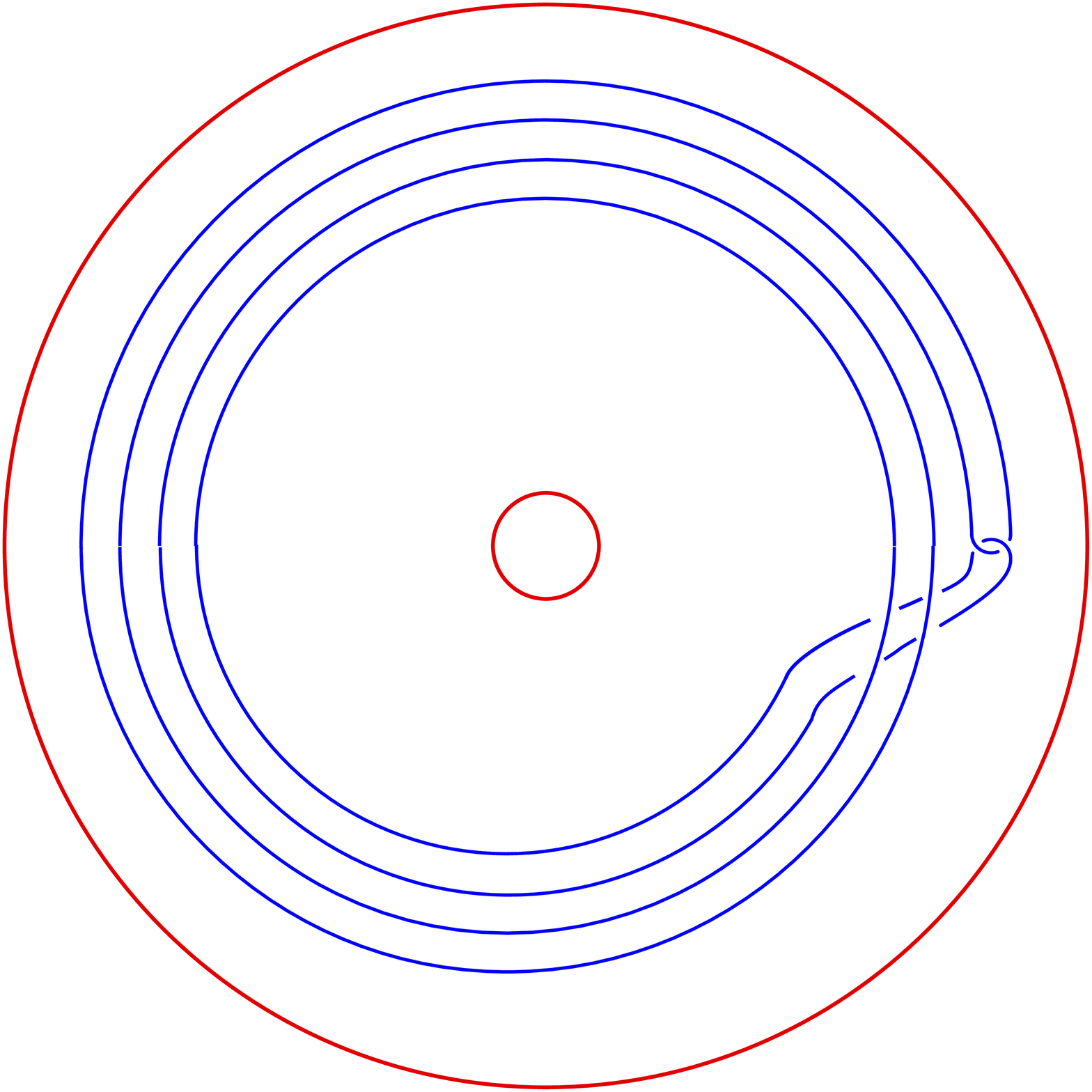}
    }%
  \subfigure[Gabai Link - Index 6]%
    {%
    \label{Gabai6}
    \includegraphics[width=0.33\textwidth]{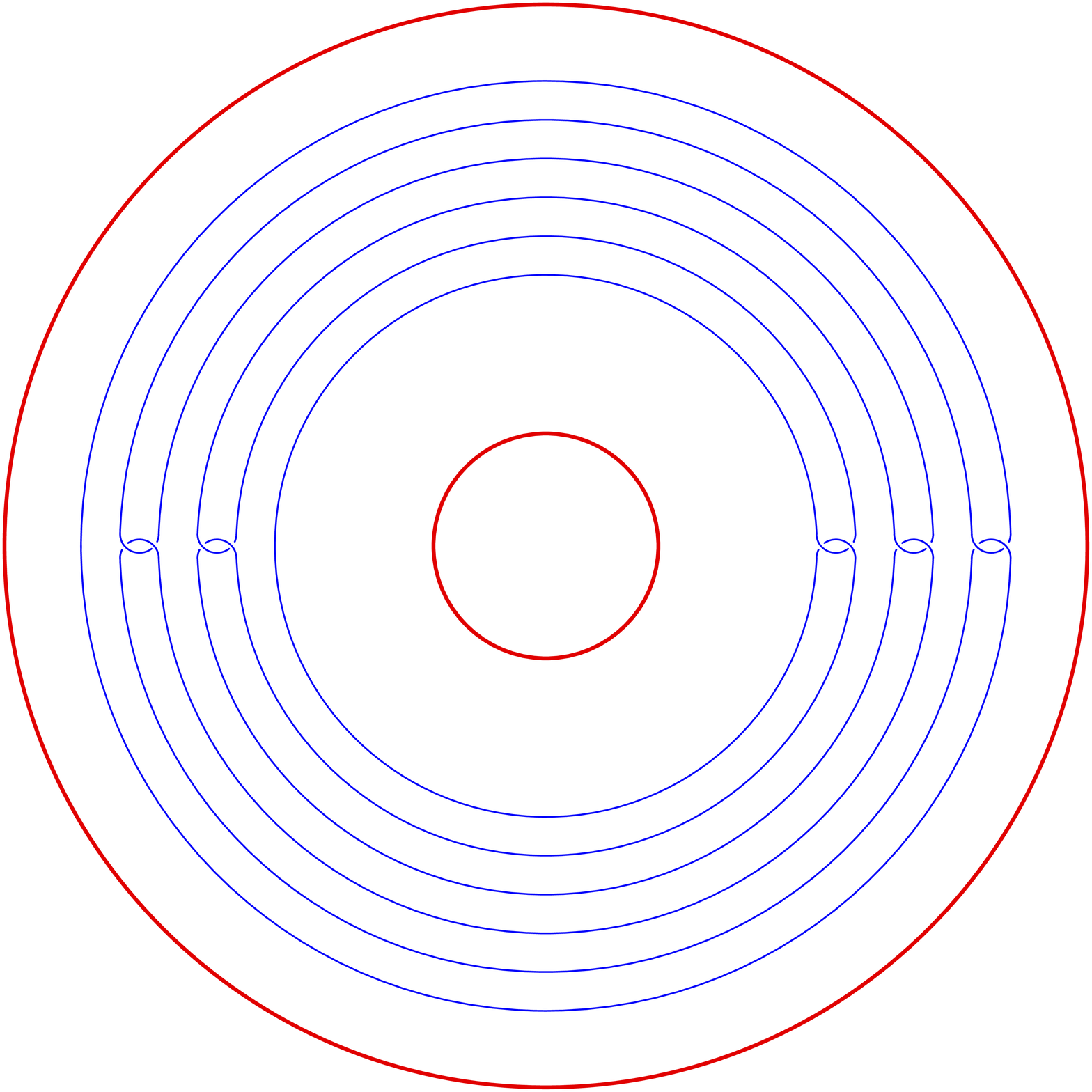}
    }%
  \end{center}
  \caption{%
    Additional Links}%
  \label{Additional Links}
\end{figure}

The next few results follow from Schubert \cite{Sch53} and regular neighborhood theory \cite{RS82}. They indicate the usefulness of being able to compute geometric index. For more details on the proofs of these lemmas, see Section 3 in \cite{GRWZ11}. We include a proof of the third lemma to give an indication of the techniques used.

\begin{lem}\label{Index 1 Lemma}  Let $T_0$ and $T_1$ be unknotted solid tori in $S^{3}$ with  $T_0 \subset Int T_1$ and $N( T_0, T_1) = 1$.  Then $\partial T_0$ and  $\partial T_1$ are parallel; i.e., the manifold $T_1 -  Int  T_0$ is homeomorphic to $\partial T_0 \times I$ where $I$ is the closed unit interval $[0,1]$.
\end{lem}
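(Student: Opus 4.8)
The plan is to realize the hypothesis $N(T_0,T_1)=1$ by a single meridional disc and then cut the region $M = T_1 \setminus \operatorname{Int} T_0$ open along it, reducing the problem to recognizing an explicit product. Let $K$ be a core of $T_0$, so that $N(K,T_1)=1$. Taking $T_0$ to be a thin regular neighborhood of $K$ and a meridional disc $D$ of $T_1$ transverse to $T_0$ and realizing the geometric index, standard innermost-circle and irreducibility arguments (using that a longitude of $T_0$ is homologically nontrivial in $T_1$, since the algebraic index is $1$ as well by the parity statement in Remark~\ref{algebraic computation}) let me arrange that $D \cap T_0 = E$ is a single meridional disc of $T_0$. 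Then $A := D \setminus \operatorname{Int} E$ is an annulus properly embedded in $M$ running from a meridian of $T_1$ on $\partial T_1$ to a meridian of $T_0$ on $\partial T_0$.

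Next I would cut $M$ along $A$. Cutting the solid torus $T_1$ along its meridional disc $D$ yields a $3$-ball $B_1 \cong B^2 \times I$, and cutting $T_0$ along $E$ yields a $3$-ball $B_0$; hence $M$ cut along $A$ is $W = B_1 \setminus \operatorname{Int} B_0$, a $3$-ball from which a sub-ball meeting $\partial B_1$ in two disjoint discs (the two copies $E_0, E_1$ of $E$) has been removed. The complementary arc of this configuration is $\kappa$, the image of $K$ cut at the single point $K \cap D$; it is a spanning arc of $B_1$ with one endpoint in each end disc. If $\kappa$ is \emph{unknotted}, i.e. boundary-parallel in $B_1$, then $B_0$ is an unknotted sub-ball and $W$ is homeomorphic to (annulus)$\times I$, that is, to a solid torus, with its boundary pattern (the two copies of $A$, together with the annuli $\partial B_1 \setminus (E_0 \cup E_1)$ and $\partial B_0 \setminus (E_0 \cup E_1)$) in standard position.

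The crux, and the step where both unknottedness hypotheses are essential, is precisely to show that $\kappa$ is unknotted. Regluing the two ends of $B_1$ to recover $T_1$ carries $\kappa$ back to $K$, so the ball-arc pair $(B_1,\kappa)$ encodes $K$ as a core of $T_1$ to which the knotting of $\kappa$ has been added; because $T_1$ is unknotted its core is trivial, so in $S^3$ one has $K \cong \widehat{\kappa}$, the knot obtained by closing $\kappa$ in the trivial way. Since $T_0$ is unknotted, $K$ is an unknot, forcing $\widehat{\kappa}$ to be trivial and hence $\kappa$ to be a trivial spanning arc. With $\kappa$ unknotted, $W$ is the asserted product, and regluing the two copies of $A$ matches the two product structures to give $M \cong \partial T_0 \times I$. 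I expect the main obstacle to be making this crux rigorous: an index-$1$ curve in a solid torus need not be a core in general, since its spanning arc may be knotted, so the argument must genuinely use that $K$ is unknotted in $S^3$ and that $T_1$ is unknotted in order to rule this out. Should the direct connect-sum argument prove delicate, a clean alternative is to verify that $M$ is irreducible with incompressible torus boundary and is homotopy equivalent to $\partial T_0 \times I$ (here index $1$ is exactly what makes $\partial T_0 \hookrightarrow M$ a $\pi_1$-isomorphism), and then to invoke Waldhausen's theorem for Haken manifolds.
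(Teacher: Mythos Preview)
The paper does not actually prove this lemma: it states that Lemmas~\ref{Index 1 Lemma}--\ref{Even Index Lemma} ``follow from Schubert \cite{Sch53} and regular neighborhood theory \cite{RS82},'' refers the reader to \cite{GRWZ11} for details, and only includes a proof of Lemma~\ref{Even Index Lemma}. So there is no in-paper argument to compare against.

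Your outline is the classical Schubert-style proof and is essentially correct. A few remarks on points you flagged or left implicit. First, replacing $T_0$ by a thinner regular neighborhood of $K$ is legitimate precisely because regular neighborhoods are unique up to ambient isotopy of $T_1$, so the homeomorphism type of $T_1\setminus\operatorname{Int}T_0$ depends only on the pair $(T_1,K)$; you should say this explicitly. Second, the step arranging $D\cap T_0$ to be a single meridional disc of $T_0$ is indeed routine once $|D\cap K|=1$: any extra components of $D\cap\partial T_0$ bound subdiscs of $D$ missing $K$, and innermost ones can be removed using irreducibility of $T_1$; the parity/algebraic-index remark is not really needed here. Third, your connect-sum argument for the unknottedness of $\kappa$ is the heart of the matter and is right: since $T_1$ is unknotted, the pair $(T_1,K)$ is obtained from $(T_1,\text{core})$ by replacing a trivial subarc with $\kappa$, so in $S^3$ one has $K\cong \widehat{\kappa}$; unknottedness of $T_0$ forces $\widehat{\kappa}$ trivial. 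Finally, with $\kappa$ trivial one has $(B_1,B_0)\cong (D^2\times I,\,D'\times I)$, so $W\cong(\text{annulus})\times I$ with $A_0,A_1$ the two end annuli; the regluing then visibly yields $(\text{annulus})\times S^1\cong \partial T_0\times I$. Your alternative via Waldhausen is also valid but heavier than needed.
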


\begin{lem}\label{Product Lemma}  Let $T_0$ be a finite union of disjoint solid tori in $S^3$. Let $T_1$ and $T_2$ be solid tori so that $T_0 \subset Int  T_1$ and $T_1 \subset Int  T_2$.  Then $N(T_0, T_2) =  N(T_0, T_1) \cdot  N(T_1, T_2)$.
\end{lem}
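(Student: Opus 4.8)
The plan is to prove the two inequalities $N(T_0,T_2)\le n_1n_2$ and $N(T_0,T_2)\ge n_1n_2$ separately, where I abbreviate $n_1=N(T_0,T_1)$ and $n_2=N(T_1,T_2)$. Throughout let $K_0$ be a core (a link) of $T_0$ and $J_1$ a core of $T_1$, so that $N(T_0,T_1)=N(K_0,T_1)$ and $N(T_1,T_2)=N(J_1,T_2)$, the latter being independent of the chosen core. I will use freely that geometric index is an isotopy invariant; in particular I may isotope $T_0$ inside $\operatorname{Int}T_1$ (extending by the identity to an isotopy of $T_2$) without changing any of $n_1$, $n_2$, or $N(T_0,T_2)$.

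For the upper bound I would first normalize the position of $T_0$. Choose a meridional disc of $T_1$ realizing $n_1$; cutting $T_1$ along it presents $T_0\cap T_1$ as a tangle in a chamber $B^2\times I$ meeting each end in $n_1$ points. I would isotope this tangle rel its endpoints so that all of its maxima, minima, and crossings are combed into a single ball $B\subset\operatorname{Int}T_1$, leaving $n_1$ monotone product strands outside $B$; the operative consequence is that every meridional disc of $T_1$ missing $B$ meets $K_0$ in exactly $n_1$ points. Now choose a meridional disc $R$ of $T_2$ realizing $n_2$, arranged (as is standard) so that $R\cap T_1$ is a union of meridional discs of $T_1$, situated at the finitely many positions along $J_1$ where $J_1$ crosses $R$. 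There are at most $n_2$ of them since $|J_1\cap R|=n_2$. After sliding $B$ (an isotopy of $T_0$ in $T_1$) off of all of these crossing positions, each such meridional disc lies in the product region and so meets $K_0$ in exactly $n_1$ points; hence $|R\cap K_0|\le n_1n_2$ and $N(T_0,T_2)\le n_1n_2$.

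For the lower bound let $D$ be any meridional disc of $T_2$ transverse to $K_0$ and, after general position, to $\partial T_1$. The heart of the argument is to isotope $D$ rel $\partial D$ until $D\cap T_1$ is a disjoint union $E_1\cup\dots\cup E_k$ of meridional discs of $T_1$, without ever increasing $|D\cap K_0|$. Granting this, each $E_j$ is a meridional disc of $T_1$, so $|E_j\cap K_0|\ge n_1$; moreover the $E_j$ are parallel, so some core $J_1'$ of $T_1$ meets $\bigcup_j E_j=D\cap T_1$ in exactly $k$ points, whence $n_2=N(J_1',T_2)\le|J_1'\cap D|=k$. Therefore $|D\cap K_0|=\sum_j|E_j\cap K_0|\ge k\,n_1\ge n_1n_2$, and minimizing over $D$ gives $N(T_0,T_2)\ge n_1n_2$.

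The main obstacle is the reduction step just invoked. The circles of $D\cap\partial T_1$ are of three kinds: inessential on $\partial T_1$, meridians of $T_1$, and essential non-meridians. Inessential circles are removed by a standard innermost-disc argument using irreducibility of $S^3$: an innermost such circle cobounds a sphere with a disc on $\partial T_1$, which bounds a ball across which $D$ may be isotoped, and pushing to the outside of $T_1$ never creates new intersections with $K_0\subset\operatorname{Int}T_1$. An innermost essential circle bounding a subdisc of $D$ inside $T_1$ must be a meridian, producing a genuine $E_j$. The delicate case is an essential non-meridian circle bounding a subdisc of $D$ in $\overline{T_2\setminus T_1}$: when the winding number of $T_1$ in $T_2$ is nonzero this cannot occur for homological reasons (the circle is nontrivial in $H_1(T_2)$ yet would bound there), but in the winding-number-zero case eliminating it requires the incompressibility analysis of Schubert together with irreducibility, which is precisely what is imported from \cite{Sch53} and \cite{RS82}. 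Once this reduction is secured, the two inequalities combine to give $N(T_0,T_2)=N(T_0,T_1)\cdot N(T_1,T_2)$.
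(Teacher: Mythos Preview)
The paper does not actually supply a proof of this lemma: it states that Lemmas~\ref{Index 1 Lemma}--\ref{Even Index Lemma} follow from Schubert~\cite{Sch53} and regular neighborhood theory~\cite{RS82}, refers the reader to Section~3 of~\cite{GRWZ11} for details, and writes out only the argument for Lemma~\ref{Even Index Lemma}. So there is no in-paper proof to compare against; what you have produced is a sketch of precisely the classical Schubert argument that the paper is invoking by citation.

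Your two-inequality strategy and the reduction of $D\cap\partial T_1$ to meridional circles is exactly Schubert's method, and the overall structure is sound. Two points worth tightening. First, in the upper bound, the phrase ``arranged (as is standard) so that $R\cap T_1$ is a union of meridional discs of $T_1$'' is not merely a choice of $R$; it is really achieved by an ambient isotopy of $T_2$ shrinking $T_1$ to a thin tube about its core $J_1$, after which any meridional disc realizing $n_2$ meets $T_1$ in exactly $n_2$ meridional discs. This is harmless since all three indices are isotopy invariants, but it should be said. Second, in the lower bound, your case analysis (inessential; essential meridian bounding into $T_1$; essential non-meridian bounding outside) does not by itself guarantee that every component of $D\cap T_1$ is a disc: even when all remaining circles are meridians, nested meridian circles can produce annular components of $D\cap T_1$, and a further boundary-parallel--annulus reduction in $\overline{T_2\setminus T_1}$ is needed before one can write $D\cap T_1=E_1\cup\cdots\cup E_k$. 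That extra step is again part of what is imported from~\cite{Sch53}. Since you already flag that the delicate parts of the reduction are being drawn from Schubert, and since the paper itself does no more than cite those sources, your sketch is appropriate and aligned with the argument the paper intends.
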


\begin{lem}\label{Even Index Lemma}Let $T$ be a solid torus in $S^{3}$ and let $T_{1},\ldots T_{k}$ be disjoint unknotted solid tori in $T$, each of geometric index $0$ in $T$. Then the geometric index of $\cup_{i=1}^{k}T_{i}$ in $T$ is even.
\end{lem}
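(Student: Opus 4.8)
The plan is to show that \emph{every} meridional disc transverse to a core of $\bigcup_{i=1}^{k}T_i$ meets it in an even number of points, so that the minimum defining the geometric index is automatically even. First I would fix a core $K_i$ of each $T_i$ and set $K = \bigcup_{i=1}^{k} K_i$, so that by definition $N\big(\bigcup_{i=1}^{k}T_i,\,T\big) = N(K,T) = \min_{D} |K\cap D|$, the minimum taken over meridional discs $D$ of $T$ transverse to $K$. Because the $T_i$ are disjoint, for any such $D$ we have the decomposition $|K\cap D| = \sum_{i=1}^{k} |K_i\cap D|$, which reduces everything to controlling each summand.

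The key step is to see that each $|K_i\cap D|$ is even. Since $N(T_i,T)=N(K_i,T)=0$ by hypothesis, and since the geometric index dominates the algebraic index (the Remark immediately following Remark~\ref{algebraic computation}), we get $a(T_i,T)=0$. By Remark~\ref{algebraic computation}, $a(T_i,T)$ equals the absolute value of the signed sum of the intersections of $K_i$ with any transverse meridional disc $D$; hence that signed sum vanishes, the positive and negative intersection counts of $K_i$ with $D$ coincide, and therefore $|K_i\cap D|$ is even. Equivalently, one may invoke directly the last sentence of Remark~\ref{algebraic computation}: the algebraic and geometric indices have the same parity, which forces $|K_i\cap D|\equiv a(T_i,T)=0 \pmod 2$ for every transverse $D$.

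Summing over $i$, $|K\cap D|$ is even for every meridional disc $D$ transverse to $K$, so in particular the minimizing disc realizes an even value and $N\big(\bigcup_{i=1}^{k}T_i,\,T\big)$ is even. I do not expect a genuine obstacle here, since the whole argument is powered by the two Remarks; the one point that needs care, and the reason the argument succeeds, is that the evenness must hold \emph{uniformly} over all transverse discs rather than only for the minimizer, and this uniformity is exactly what the algebraic-index characterization supplies. It is worth noting that disjointness is used only to make the intersection counts add, while the unknottedness of the $T_i$ plays no role in this parity statement.
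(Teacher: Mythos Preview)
Your proposal is correct and follows essentially the same approach as the paper: both arguments hinge on Remark~\ref{algebraic computation} to match the parity of $|K_i\cap D|$ with that of the algebraic index, and both deduce $a(K_i,T)=0$ from $N(T_i,T)=0$. The only cosmetic difference is that the paper phrases this by contradiction (if the total count were odd, some $|K_i\cap D|$ would be odd, forcing $K_i$ to be essential, contradicting that geometric index $0$ puts $K_i$ in a ball), whereas you argue directly; your observation that unknottedness is not used is also consistent with the paper's proof, which never invokes it.
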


\begin{proof}
If the geometric index were odd, then there would be a meridional disc $D$ of $T$ that would intersect the cores of  $\cup_{i=1}^{k} T_{i} $ transversally an odd number of times.  This would mean that $D$ must intersect the core of some $T_{i}$  an odd number of times.  But if a meridional disc of $T$ intersects a simple closed curve $J$ transversally an odd number of times, the algebraic index of $J$ in $T$ is odd by Remark \ref{algebraic computation}, and so $J$ is essential in $T$.  However, for each $i$, the core of  $T_{i}$ is inessential because it lies in a ball in $T$. 
\end{proof}

The next two results make use of the material on geometric index to determine when the boundaries of certain tori are parallel as a further illustration of the use of geometric index. See \cite{Wri89} for details. We include the proof of the first theorem to further indicate the techniques used.

\begin{thm}\label{Whitehead Parallel Theorem}  {Let $W$ be torus in a solid torus $T$ in  $S^3$ with the core of $W$  forming a Whitehead link (as in Figure \ref{Whitehead}) in  $T$}.  If $\ T' \subset T$ is a solid unknotted torus whose boundary separates $\partial W$ from $\partial T$, then $\partial T'$ is parallel to either $\partial W$ or $\partial T$.
\end{thm}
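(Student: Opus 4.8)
The plan is to reduce the entire statement to an arithmetic fact about the geometric index, namely its multiplicativity (Lemma \ref{Product Lemma}), combined with the fact that a Whitehead link has geometric index $2$, a prime. First I would extract the nesting hidden in the hypothesis. Since $T' \subset T$ and $\partial T'$ separates $\partial W$ from $\partial T$, the solid torus $\partial T$ lies outside $T'$, so $\partial W$ must lie inside $T'$; this forces $W \subset \mathrm{Int}\, T' \subset T' \subset \mathrm{Int}\, T$. With this chain in hand, Lemma \ref{Product Lemma} applies and yields
\[
N(W, T) = N(W, T') \cdot N(T', T).
\]
Because the core of $W$ forms a Whitehead link in $T$, the left side equals $2$ (see Figure \ref{Index2Fig}). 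Neither factor on the right can vanish, since a zero factor would make the product $0 \neq 2$, so both are positive integers whose product is the prime $2$. Hence exactly one of $N(W, T')$ and $N(T', T)$ equals $1$ and the other equals $2$.

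Next I would feed the index-$1$ factor into Lemma \ref{Index 1 Lemma}, which converts a geometric index of $1$ between unknotted solid tori into parallelism of their boundaries. All three tori in play are unknotted: $T'$ by hypothesis, and both $W$ and $T$ because the two components of a Whitehead link are unknots, so their regular neighborhoods are unknotted solid tori. Thus if $N(W, T') = 1$, applying Lemma \ref{Index 1 Lemma} to the pair $W \subset T'$ gives that $\partial W$ and $\partial T'$ are parallel, while if $N(T', T) = 1$, applying it to $T' \subset T$ gives that $\partial T'$ and $\partial T$ are parallel. In either case $\partial T'$ is parallel to $\partial W$ or to $\partial T$, which is exactly the conclusion.

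The logical skeleton is short, so the main obstacle is not the dichotomy itself but the verification that the hypotheses of the two cited lemmas genuinely hold. One must be careful that the separation hypothesis really does place $W$ in the interior of $T'$ and $T'$ in the interior of $T$, so that the multiplicativity in Lemma \ref{Product Lemma} is legitimate, and one must confirm the unknottedness of $W$ and $T$ implicit in the Whitehead configuration before invoking Lemma \ref{Index 1 Lemma}. The decisive idea is the passage from the single numerical identity $N(W,T)=2$ to a rigid two-case alternative, made possible only because $2$ is prime; once that is in place the geometric conclusion follows from the parallelism lemma, with the remaining technical details handled as in \cite{Wri89}.
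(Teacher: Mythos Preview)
Your proposal is correct and follows essentially the same route as the paper's proof: use the separation hypothesis to get $W\subset\mathrm{Int}\,T'\subset T'\subset\mathrm{Int}\,T$, apply multiplicativity (Lemma \ref{Product Lemma}) to obtain $N(W,T')\cdot N(T',T)=2$, and then invoke Lemma \ref{Index 1 Lemma} on whichever factor equals $1$. Your write-up is more detailed about verifying unknottedness and the nesting, but the argument is the same.
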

\proof Since $\partial T'$ separates  $\partial W$ from $\partial T$, we have $W \subset Int T'$ and $T' \subset Int T$.  Since $N(W,T') \cdot N(T',T)=N(W,T)=2$, either $N(W,T')=1$ or $N(T',T)=1$.  The conclusion now follows from Lemma \ref{Index 1 Lemma}.\qed

\begin{thm}\label{Bing Parallel Theorem}
{Let $F_1 \cup F_2$ be a pair of disjoint tori in a solid torus $T$ in $S^3$, with the cores of $F_1$ and  $F_2$ forming a Bing link (as in Figure \ref{Bing}) in  $T$}.  If $S$ is the boundary of a solid unknotted torus that separates $\partial F_1 \cup \partial F_2  \cup \partial T$, then $S$ is parallel  to one of $\partial F _1$, $\partial F _2$, $\partial T$.
\end{thm}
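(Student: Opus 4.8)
The plan is to follow the strategy of Theorem \ref{Whitehead Parallel Theorem}, replacing the single torus $W$ by the pair $F_1\cup F_2$ and organizing the argument around how the unknotted solid torus $T'$ with $\partial T'=S$ separates the three boundary tori. Write $c_1,c_2$ for cores of $F_1,F_2$, and recall that for the Bing link $N(F_1,T)=N(F_2,T)=0$ while $N(F_1\cup F_2,T)=2$ (Figure \ref{Bing}). Since $S$ is unknotted and disjoint from $\partial F_1\cup\partial F_2\cup\partial T$, it bounds solid tori on each side, and the hypothesis that it separates the three tori leaves, up to relabeling $F_1\leftrightarrow F_2$, exactly two configurations: (a) $F_1\cup F_2\subset\operatorname{Int}T'$ with $T'\subset\operatorname{Int}T$; or (b) $F_1\subset\operatorname{Int}T'\subset\operatorname{Int}T$ with $F_2$ outside $T'$.

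Configuration (a) runs exactly as in Theorem \ref{Whitehead Parallel Theorem}. By Lemma \ref{Product Lemma}, $N(F_1\cup F_2,T')\cdot N(T',T)=N(F_1\cup F_2,T)=2$; neither factor can be $0$, so they are $1$ and $2$ in some order. If $N(T',T)=1$, Lemma \ref{Index 1 Lemma} gives $S$ parallel to $\partial T$. If instead $N(F_1\cup F_2,T')=1$, a meridional disc of $T'$ meets $c_1\cup c_2$ in a single point, hence meets exactly one core, say $c_1$, once; by Remark \ref{algebraic computation} this odd intersection forces $c_1$ essential in $T'$, so $N(F_1,T')=1$, and Lemma \ref{Index 1 Lemma} gives $S$ parallel to $\partial F_1$.

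Configuration (b) is where the real work lies, and I expect it to be the main obstacle. Here Lemma \ref{Product Lemma} only yields $N(F_1,T')\cdot N(T',T)=N(F_1,T)=0$, which is far too weak to single out a parallelism. First I would invoke non-splittability of the Bing link to rule out $N(F_1,T')=0$: if it vanished, $c_1$ would lie in a ball $B\subset T'$, and since $c_2\subset T\setminus T'$ is disjoint from $B$, the sphere $\partial B$ would split $c_1$ from $c_2$, a contradiction. Hence $N(F_1,T')\ge1$, which forces $N(T',T)=0$, so $T'$ lies in a ball in $T$; if I can show $N(F_1,T')=1$, then Lemma \ref{Index 1 Lemma} yields $S$ parallel to $\partial F_1$, and the relabeled configuration gives $S$ parallel to $\partial F_2$.

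The crux is therefore to exclude $N(F_1,T')\ge 2$. I would take a meridional disc $D$ of $T$ realizing $N(F_1\cup F_2,T)=2$; since $c_1,c_2$ are inessential in $T$, Remark \ref{algebraic computation} shows every meridional disc meets each of them an even number of times, so $D$ meets one core exactly twice and the other not at all. Applying the innermost-circle surgery that underlies Lemma \ref{Product Lemma} to $D\cap S$, I would extract a meridional disc of $T'$ contained in $D$, which meets $c_1$ at most twice and hence shows $N(F_1,T')\le 2$. It then remains to rule out $N(F_1,T')=2$: in that case $c_1$ meets this meridional disc of $T'$ twice, and comparing signs with the oppositely signed intersections of $c_1$ with $D$ pins down the algebraic index of $c_1$ in $T'$; I expect that combining this with the fact that $c_2$ lies outside $T'$ and that $c_1\cup c_2$ is the standard Bing clasp produces a contradiction. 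Establishing this last incompatibility, together with controlling the essential circles of $D\cap S$ (showing the relevant one is a meridian rather than a longitude of $T'$, so that the surgery produces a genuine meridional disc), is the hardest part of the argument; once $N(F_1,T')=1$ is secured, Lemma \ref{Index 1 Lemma} completes every case.
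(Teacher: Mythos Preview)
The paper does not actually supply a proof of this theorem; it states the result and refers the reader to \cite{Wri89} for details. So there is no in-paper argument to compare against, and your outline must be judged on its own.

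Your configuration (a) is fine and mirrors the Whitehead case cleanly. The genuine gap is exactly where you flag it: in configuration (b) you need $N(F_1,T')=1$, and your proposed route does not get there. Once you have deduced $N(T',T)=0$, the solid torus $T'$ sits inside a $3$-ball in $T$, so a meridional disc $D$ of $T$ realizing $N(F_1\cup F_2,T)=2$ need not meet $\partial T'$ in any essential curve at all; the innermost-circle surgery you invoke from Lemma \ref{Product Lemma} therefore cannot be expected to produce a meridional disc of $T'$, and your hoped-for bound $N(F_1,T')\le 2$ is unsupported. Even granting $N(F_1,T')\le 2$, your plan to exclude the value $2$ by comparing intersection signs is only an algebraic-index computation; it would show $a(c_1,T')\in\{0,2\}$ or $a(c_1,T')=1$ depending on orientations, but geometric index can exceed algebraic index, so this alone cannot force $N(F_1,T')=1$.

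A more promising ingredient you have not used is the complementary solid torus $T''=S^3\setminus\operatorname{Int}T'$: since $c_2\subset T''$ and $\operatorname{lk}(c_1,c_2)=\pm1$, one gets $a(c_1,T')\cdot a(c_2,T'')=\pm1$, hence $a(c_1,T')=\pm1$; but again this controls only the algebraic index. Closing the gap requires a genuinely geometric argument specific to the Bing pattern (this is what \cite{Wri89} supplies), not just the index bookkeeping of Lemmas \ref{Index 1 Lemma}--\ref{Even Index Lemma}.
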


\section{Geometric Index for Chambers}\label{Chamber Section}

In this section, we introduce the concept of geometric index for chambers of the form $B^2\times I$, 
formed by pairwise disjoint meridional discs in a solid torus.

A \emph{chamber} $C$ is a space homeomorphic to  $B^2\times I$. The \emph{top} of the chamber, $C_t$, corresponds to  $B^2\times \{1\}$ and the \emph{bottom} of the chamber, $C_b$, corresponds to  $B^2\times \{0\}$. We think of the chamber $C$ as the region between two meridional discs  in a solid torus, with the meridional discs corresponding to $C_t\text{ and }C_b$. An \emph{interior meridional disc} $M$ in such a chamber $C$ is a disc in  $C-(C_t\cup C_b)$,  where $\partial M$ is essential in $\partial C-(C_t\cup C_b)$.
Let $L$ be a collection of arcs and simple closed curves in a chamber $C$ so that each arc has its endpoints in $C_t\cup C_b$ and so that each simple closed curve is in $C-\partial C$. The \emph{geometric index} of $L$  in $C$, $N(L,C)$, is the minimum  of $\vert L\cap M\vert $ over all interior meridional discs $M$ of $C$.

\begin{figure}[h]
\begin{center}
  \subfigure[Link in Chamber]%
    {%
    \label{ChamberLink}
    \includegraphics[width=.2\textwidth]{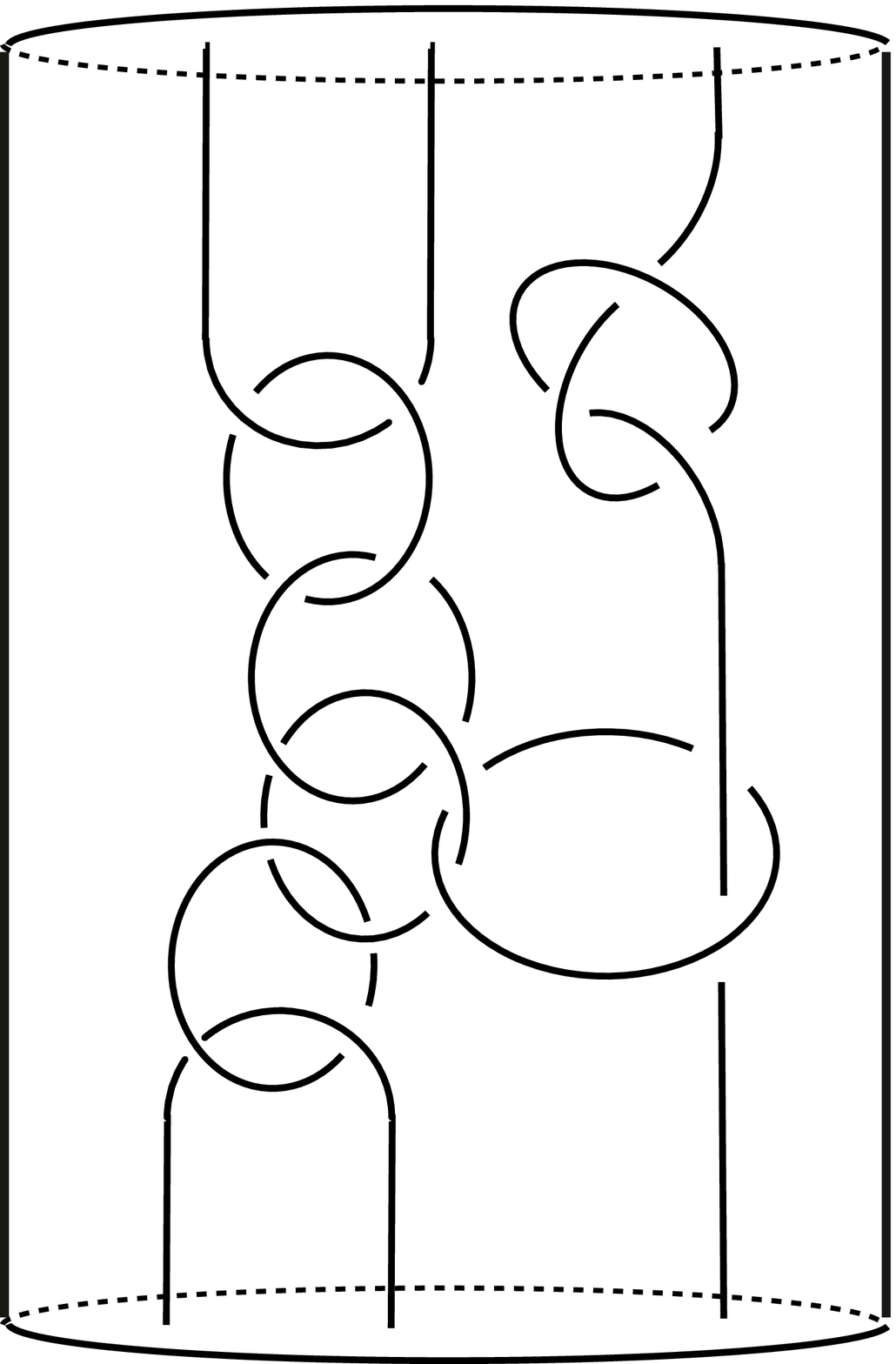}
    }\hspace{1em}
   \subfigure[Whitehead Clasp]%
    {%
    \label{ClaspWhitehead}
    \includegraphics[width=.2\textwidth]{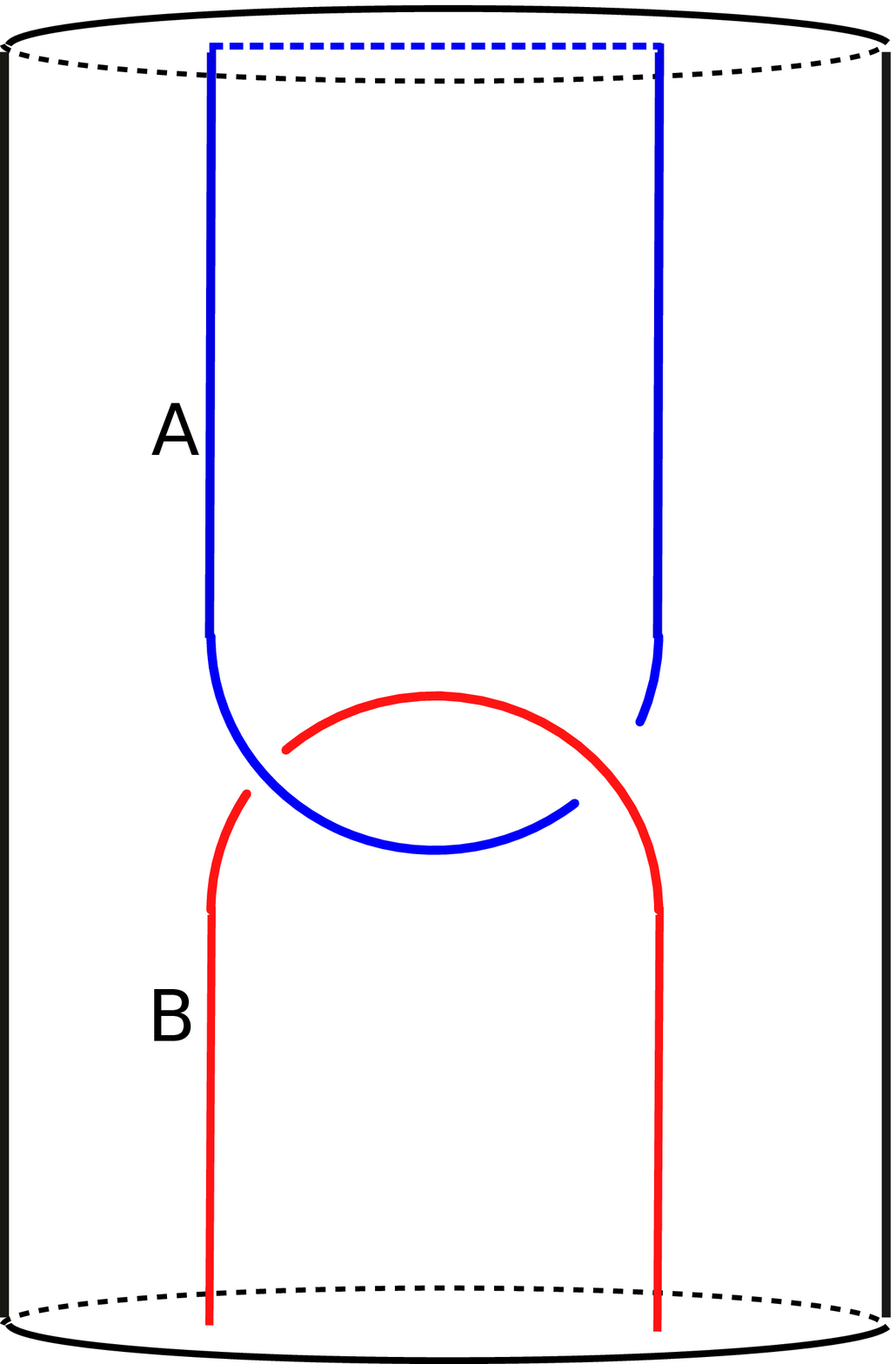}
    }\hspace{1em}%
  \subfigure[Square Knot Clasp]%
    {%
    \label{ClaspSquare}
    \includegraphics[width=.2\textwidth]{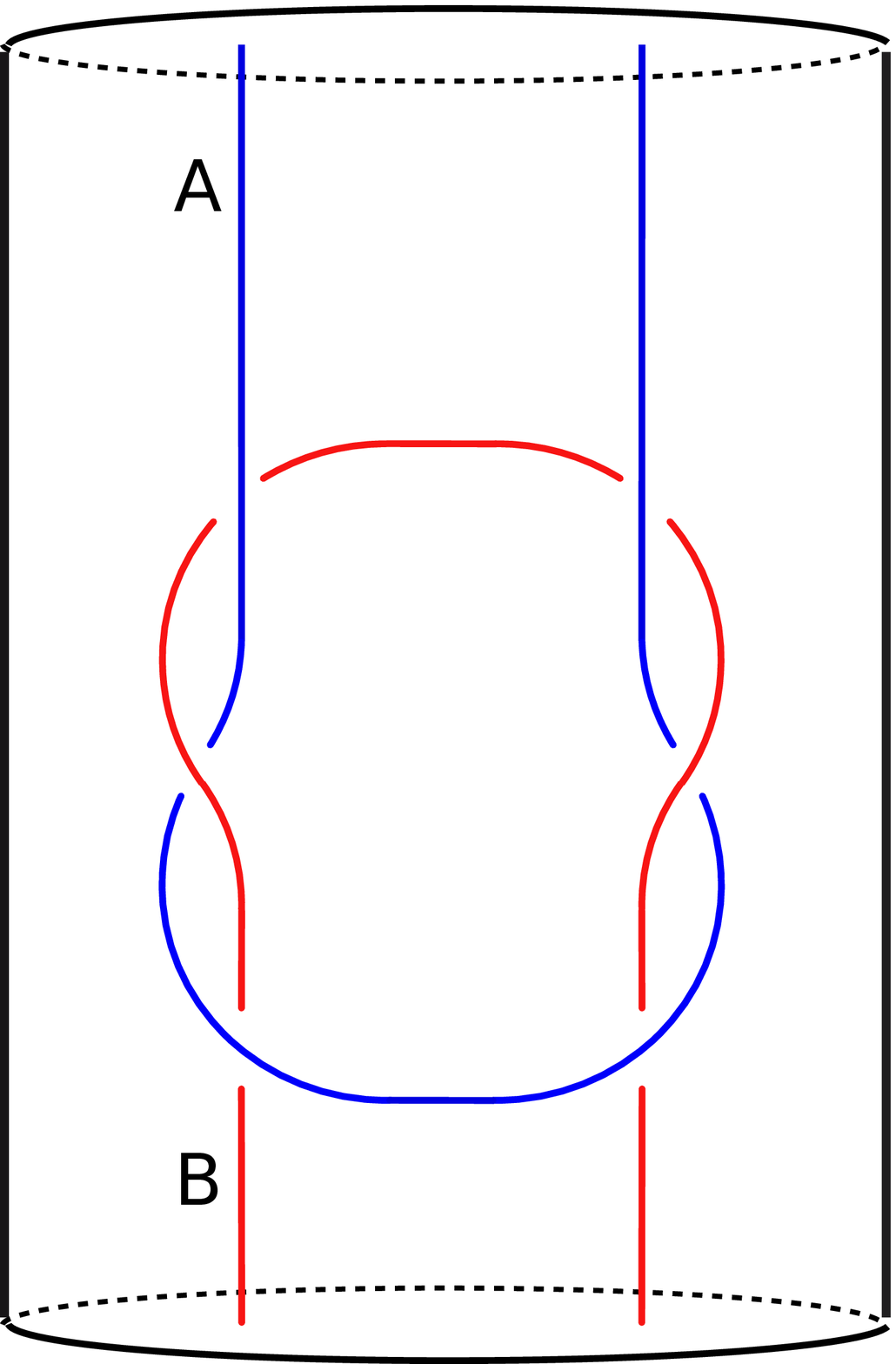}
    }\hspace{1em}
    \subfigure[Spanning Arcs]%
    {%
    \label{SpanningArcs}
    \includegraphics[width=.2\textwidth]{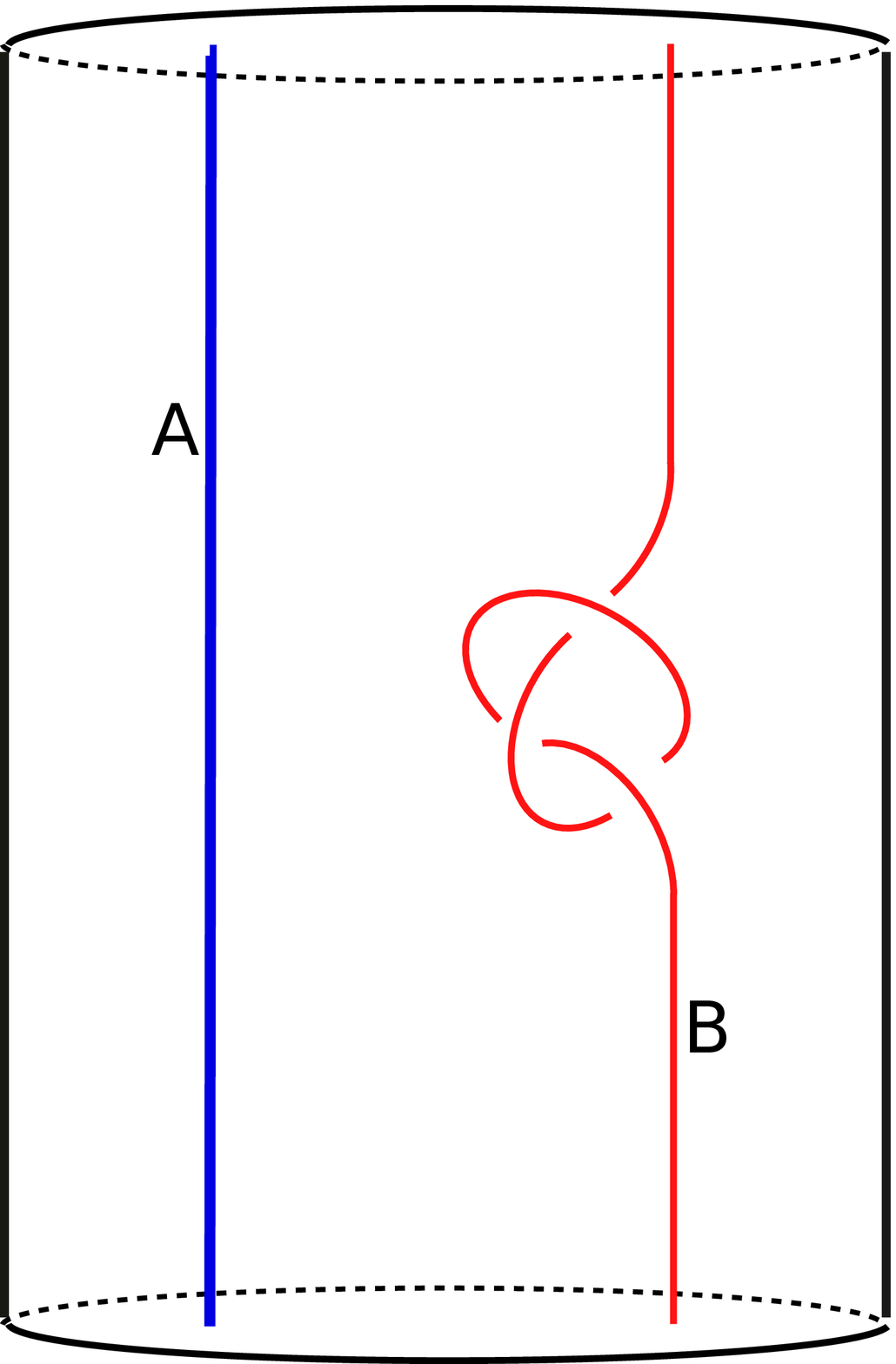}
    }
    \subfigure[Antoine Clasp]{%
    \label{AntoineClasp}
    \includegraphics[width=.2\textwidth]{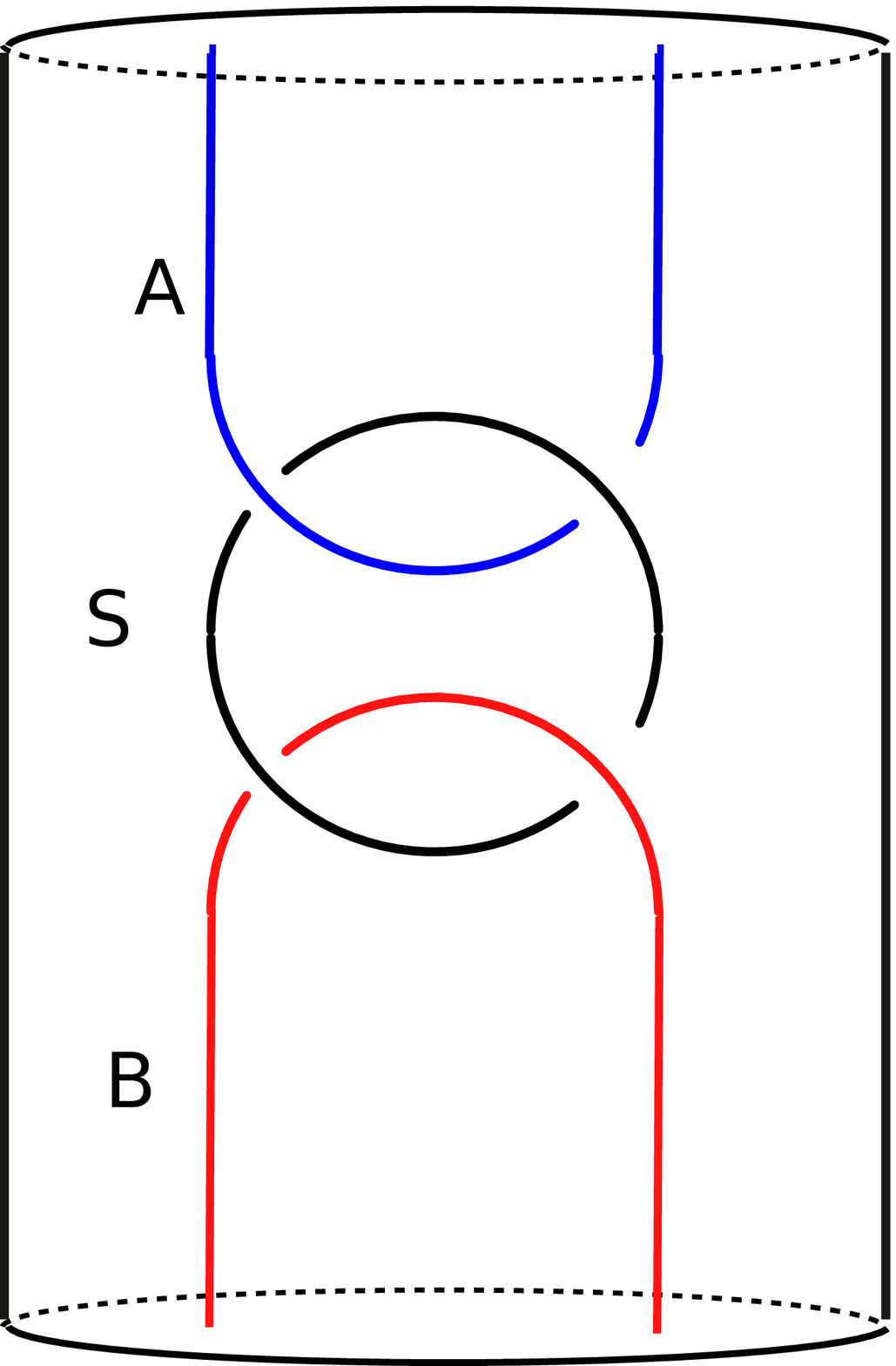}
    }
  \end{center}
  \caption{%
    Clasps and Spanning Arcs}%
  \label{Clasps-Arcs}
\end{figure}

{Consider a link  $L=A\cup B\cup S$ where $A$ is an arc with endpoints in $C_t$, $B$ is an arc with endpoints in $C_b$, and $S$ is a possibly empty union of circles. The link $L$ forms a \emph{clasp} in $C$ if the geometric index of $L$ in $C$ is two.} A \emph{spanning arc} A in $C$ is an arc with one endpoint in $C_t$, and the other endpoint in $C_b$. See Figure \ref{Clasps-Arcs} for some examples.

\begin{lem}
\label{Chamber Index Lemma}
{The Whitehead clasp, the Square Knot clasp, and the Antoine clasp (pictured in Figure \ref{ClaspWhitehead},  Figure \ref{ClaspSquare}, and Figure \ref{AntoineClasp}) have geometric index 2 in the indicated chambers.} Each spanning arc in a chamber has geometric index 1 in that chamber.
\end{lem}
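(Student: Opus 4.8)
The plan is to split the statement into the (easy) spanning-arc assertion and the three clasp computations, and to treat all three clasps by one uniform argument: an upper bound obtained from a meridional disc pushed against $C_t$, together with a lower bound that combines a parity count with the non-splittability of a classical link. For the spanning arc, recall that any interior meridional disc $M$ has $\partial M$ essential in the side annulus $\partial B^2\times I$, so $M$ separates $C$ into a piece $C^+\supset C_t$ and a piece $C^-\supset C_b$. A spanning arc has one endpoint in $C_t$ and the other in $C_b$, hence must meet $M$; this gives $N\ge 1$, and the vertical arc $\{p\}\times I$ together with $M=B^2\times\{1/2\}$ realizes a single transverse intersection, so the geometric index is exactly $1$.

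For the upper bound on a clasp, write $L=A\cup B\cup S$ with $\partial A\subset C_t$ and $\partial B\subset C_b$. In each pictured clasp the set $B\cup S$ stays below some level $B^2\times\{s_0\}$, so pushing a level disc $M=B^2\times\{s\}$ with $s>s_0$ toward $C_t$ meets $L$ only in the two strands of $A$ running to its endpoints. Thus $|L\cap M|=2$, giving $N(L,C)\le 2$.

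The lower bound is the heart of the matter, and I would establish that $|L\cap M|$ is always \emph{even} and \emph{nonzero}. For parity: since $A$ and $B$ each have both endpoints on a single end of $C$, and $M$ separates $C_t$ from $C_b$, each of $A$ and $B$ meets a transverse $M$ an even number of times, as does each circle of $S$; hence $|L\cap M|$ is even, which already excludes the value $1$. For non-vanishing, suppose $|L\cap M|=0$ for some $M$. Then $A\subset\overline{C^+}$ and $B\subset\overline{C^-}$, with the circles of $S$ distributed between the two balls. Capping $A$ by an arc in $C_t$ and $B$ by an arc in $C_b$, and then pushing the caps into the respective interiors, yields disjoint simple closed curves $\hat A\subset\operatorname{int}\overline{C^+}$ and $\hat B\subset\operatorname{int}\overline{C^-}$; since $\overline{C^+}\cap\overline{C^-}=M$, a slight push-off of $M$ is a $2$-sphere separating $\hat A$ (and the circles of $S$ lying in $C^+$) from $\hat B$ (and those lying in $C^-$). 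Thus the capped link $\hat A\cup\hat B\cup S$ would be split. But for the three figures the capped link is respectively the Whitehead link and the analogous square-knot and Antoine clasped links, each of which is non-split, a contradiction. Hence $|L\cap M|$ is even and positive, so $N(L,C)\ge 2$, and with the upper bound $N(L,C)=2$.

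The step I expect to be the main obstacle is this non-vanishing claim: one must correctly identify the capped configuration in each figure with its intended classical link and invoke non-splittability. For the Whitehead clasp this is the standard fact that the Whitehead link is non-split even though its linking number vanishes, so the vanishing of the algebraic index in Figure \ref{Whitehead} gives no help here; the square-knot and Antoine cases reduce similarly to the non-splittability of the corresponding two-component clasped links. A minor technical point to verify is that the sphere built from $M$ can be made disjoint from the pushed-in caps, so that a genuine splitting sphere is produced.
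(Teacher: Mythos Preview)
Your argument is correct and takes a genuinely different route from the paper's. The spanning-arc case is the same in both. For the clasps, the paper fixes a disc $D$ spanning $A$ (so $\partial D=A\cup\alpha$ with $\alpha\subset C_t$), notes that $B$ pierces $D$ in exactly one point $x$, and runs an innermost-circle argument on $M\cap D$: an innermost curve in $M$ whose companion subdisc of $D$ misses $x$ can be removed by an isotopy, while one whose subdisc contains $x$ would produce a $2$-sphere pierced exactly once by $B$, which is impossible. Once $M\cap D=\emptyset$, the set $D\cup B$ contains a path from $C_t$ to $C_b$ missed by $M$, contradicting the spanning-arc case. The Square Knot clasp is handled the same way, and for the Antoine clasp the paper alternatively inserts a middle meridional disc and invokes Theorem~\ref{Main Theorem} on two stacked Whitehead clasps. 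Your capping-and-splitting argument is cleaner and uniform across the three figures, but it trades self-containment for an appeal to the non-splittability of the Whitehead link and its analogues---which is exactly what the paper's disc argument proves from scratch. One wording quibble: a push-off of the meridional disc $M$ is still a disc, not a sphere; what you want is the boundary sphere $\partial\,\overline{C^{+}}$, or simply the observation that $\overline{C^{+}}$ and $\overline{C^{-}}$ are disjoint $3$-balls each containing one nonempty piece of the capped link.
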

\begin{proof} Any interior meridional disc must intersect a spanning arc, even if the arc is knotted in the chamber. To see this, note that any  meridional disc divides the chamber into two components  with $C_b$ in one component and $C_t$ in the other. If the meridional disc misses  the arc completely, then the arc  is contained in one of the two components which cannot happen.

\begin{figure}[ht]
\begin{center}
    \includegraphics[height=0.25\textwidth]{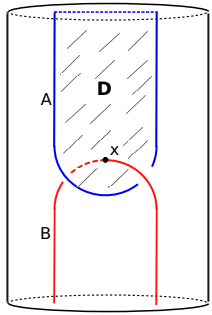}
\end{center}
\caption{Whitehead Clasp with Spanning Disc}
\label{WhiteheadClasp-Disc}\end{figure}

For the Whitehead clasp, consider Figure \ref{WhiteheadClasp-Disc}. Let $D$ be a disc in the chamber with boundary $A$ together with an arc in one end of the chamber  that joins the two points of $A$ in that end. There is no loss of generality in only considering interior meridional discs that intersect $A\cup B$ transversely  since non-transverse intersections can be removed by a small general position move which only decreases the number of intersections. By an argument similar to that in the proof of Lemma \ref{Even Index Lemma}, every such meridional disc must intersect each of $A$ and $B$ an even number of times. There is clearly an interior  meridional disc that intersects $A\cup B$ twice, so we must only eliminate the possibility of no intersections.

Suppose $M$ is an interior meridional disc that misses $A\cup B$. By a general position adjustment away from 
$A\cup B$, we can assume that $M$ intersects the disc $D$ in a finite number of simple closed curves. Consider an innermost such curve in $M$. If this curves bounds a disc in $D$ missing $x$, the disc in $M$ bounded by the curve together with the disc in $D$ bounded by the curve bound a 3 ball. This $3$-ball can be used to push the disc in $M$ to the disc in $D$ which can then be pushed slightly off $D$.   This process reduces the number of curves of intersection of $M$ with $D$. 

If this innermost curve in $M$ bounds a disc in $D$ containing $x$, the disc in $M$ bounded by the curve together with the disc in $D$ bounded by the curve form a 2-sphere that is pierced once by $B$. However, this is impossible.  So eventually all curves of  intersection of $M$ with $D$ can be eliminated. But then $M$ misses a spanning arc of the chamber which is impossible. So no interior meridional disc $M$ can miss $A\cup B$.

{The proofs for the Square Knot clasp and the Antoine clasp are similar. Note that the Antoine clasp can be divided into two Whitehead clasps by adding a meridional disc in the middle of the cylinder in Figure \ref{AntoineClasp}. Then the techniques of Theorem \ref{Main Theorem} below can be used to show that the geometric index is two.}

\end{proof}

\section{Main Results}\label{Main Section}

We now work towards proving the main result (Theorem \ref{Main Theorem}) and some corollaries.

\textbf{Setup:} Throughout this section, let $L$ be a link in the interior of a solid torus $T$ and let $D_0, \ldots D_{m-1}$ be a cyclically ordered pairwise disjoint collection of meridional discs in $T$, each intersecting $L$ transversally in $n$ points. Let $C_i$ be the chamber bounded by $D_{i-1}$ and $D_{i \mod m}$ for $1\leq i\leq m$. 

\begin{remark}\label{At Most n Remark} Note that $N(L\cap C_i,C_i)\leq n$  for each $i$. This follows from the fact that an interior meridional disc parallel to and close enough to the ends of $C_i$ intersects $L\cap C_i$ in $n$ points by the setup.
\end{remark}

\begin{lem}\label{Missing M-Discs Lemma} If  $N(L\cap C_i,C_i)\geq n$  for each $i$, and  if K is a meridional disc for $T$ that misses $\cup_{i=1}^k D_i$, then 
$\vert L \cap K\vert \geq n$.
\end{lem}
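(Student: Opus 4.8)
The plan is to show that any meridional disc $K$ of $T$ disjoint from all of the $D_i$ must actually be an interior meridional disc of a single chamber; once this is established, the hypothesis $N(L\cap C_i,C_i)\geq n$ applies directly and finishes the argument.

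First I would locate $K$. The discs $D_0,\ldots,D_{m-1}$ cut $T$ along their union, so $T-\bigcup_i D_i$ is the disjoint union of the open chambers $\operatorname{Int} C_1,\ldots,\operatorname{Int} C_m$. Since $K$ is connected and disjoint from every $D_i$, it lies in a single such component; that is, $K\subset C_j-(C_t\cup C_b)$ for some $j$, where $C_t$ and $C_b$ are the two bounding discs of $C_j$ (two of the $D_i$). In particular $K$ avoids $C_t\cup C_b$, which is the first requirement for $K$ to be an interior meridional disc of $C_j$.

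Next I would verify the boundary condition. The curve $\partial K$ is a meridian of $T$ lying on $\partial T$, and because $K$ misses every $D_i$ it also misses the meridional curves $\partial D_i$; hence $\partial K$ lies on the side annulus $A=\partial C_j\cap\partial T$. It remains only to see that $\partial K$ is essential in $A$. If it were not, it would bound a disc in $A\subset\partial T$ and thus be contractible in $\partial T$, contradicting the fact that a meridian is essential in $\partial T$. Therefore $\partial K$ is essential in $A$, and $K$ satisfies the definition of an interior meridional disc of $C_j$.

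Finally, since $K\subset C_j$ we have $L\cap K=(L\cap C_j)\cap K$, and because $K$ is an interior meridional disc of $C_j$ the definition of the geometric index of a link in a chamber gives
\[
\lvert L\cap K\rvert=\lvert (L\cap C_j)\cap K\rvert\geq N(L\cap C_j,C_j)\geq n,
\]
which is the desired conclusion. The one place needing any care is the essentiality of $\partial K$ in the side annulus — that is, ruling out a "trivial" disc whose boundary is contractible on $\partial T$ — but this is immediate from the observation that meridians are essential on $\partial T$; the rest is an unwinding of the definitions of chamber and chamber geometric index.
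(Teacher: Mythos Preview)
Your argument is correct and follows exactly the same approach as the paper's proof, which is the single sentence ``This follows since any such meridional disc is an interior meridional disc for one of the chambers.'' You have simply unpacked the details behind that sentence: locating $K$ in a single chamber, checking that $\partial K$ lies on the side annulus, and verifying essentiality there.
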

\begin{proof}This follows since any such meridional disc is an interior meridional disc for one of the chambers.
\end{proof}

\begin{lem}\label{Replacement Disc Lemma}
Assume  $N(L\cap C_i,C_i)\geq n$  for each $i$. Let $D^{\prime}$ be a disc that lies in the interior of $T$ with $D^{\prime}\cap (\cup_{i=0}^{m-1}D_i)=\partial D^{\prime}$. Then $\partial D^{\prime}$ is in some $D_i$. Let $D^{\prime\prime}$ be the disc bounded by $\partial D^{\prime}$ in $D_i$. Then $\vert L\cap D^{\prime}\vert \geq \vert L\cap D^{\prime\prime}\vert $.
\end{lem}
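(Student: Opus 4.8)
The plan is to prove the structural claim first and then establish the inequality by contradiction, using the chamber-index hypothesis in an essential way. For the structural claim, observe that $\partial D'$ is a single circle and, since $D'\cap(\cup_{j}D_j)=\partial D'$, it is a connected subset of the pairwise disjoint union $\cup_{j=0}^{m-1}D_j$; hence it lies in exactly one disc $D_i$. Inside the disc $D_i$ the circle $\partial D'$ bounds a unique subdisc, which is the disc $D''$ of the statement. After a preliminary general-position adjustment I will assume that $L$ meets each $D_j$ transversally and that no point of $L\cap D_i$ lies on $\partial D''$, so that $|L\cap D_i|=|L\cap D''|+|L\cap(D_i\setminus D'')|=n$.

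For the inequality I would argue by contradiction, supposing $|L\cap D'|<|L\cap D''|$, and manufacture an interior meridional disc of one chamber that meets $L$ in fewer than $n$ points. The candidate is the swapped disc $\tilde D=(D_i\setminus \mathrm{int}\,D'')\cup D'$; since $\mathrm{int}\,D'$ is disjoint from every $D_j$ and meets $D_i$ only along $\partial D'=\partial D''$, this is an embedded disc with $\partial \tilde D=\partial D_i$, hence a meridional disc of $T$. Because $\mathrm{int}\,D'$ is connected, disjoint from $\cup_j D_j$, and accumulates onto $D_i$ along $\partial D'$, it lies in the interior of one of the two chambers meeting $D_i$, say $C_i$; pushing $\tilde D$ slightly off $D_i$ toward that side turns it into an honest interior meridional disc of $C_i$ (its boundary becomes a curve parallel to $\partial D_i$ on the side annulus $\partial C_i-(C_t\cup C_b)$, which is essential). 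A small push preserves transverse intersection counts, so $|\tilde D\cap L|=|L\cap(D_i\setminus D'')|+|L\cap D'|=(n-|L\cap D''|)+|L\cap D'|$, which under our assumption is strictly less than $n$. As $\tilde D\subset C_i$, this contradicts $N(L\cap C_i,C_i)\ge n$, and therefore $|L\cap D'|\ge |L\cap D''|$.

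The point I expect to require the most care is the recognition that the inequality is false without the chamber hypothesis, and this dictates the whole strategy: an arc of $L$ that turns back into $C_i$ with both endpoints on $D''$ contributes two points to $|L\cap D''|$ but none to $|L\cap D'|$, so a purely local cut-and-paste count cannot succeed, and the assumption $N(L\cap C_i,C_i)\ge n$ is exactly what forbids the resulting low-intersection disc. The remaining technical obligations are routine but must be stated: that $\mathrm{int}\,D'$ lies in a single chamber adjacent to $D_i$ (so the push direction is well defined and $\tilde D$ lands in one chamber), that the push yields a genuine interior meridional disc rather than a boundary-parallel or compressible one, and that general position keeps all intersection counts with $L$ unchanged under the push.
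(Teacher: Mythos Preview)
Your proposal is correct and follows essentially the same route as the paper: assume $|L\cap D'|<|L\cap D''|$, form the swapped disc $(D_i\setminus D'')\cup D'$, push it slightly off $D_i$, and obtain a contradiction. The only cosmetic difference is that the paper packages the final step as an appeal to the preceding one-line lemma (a meridional disc missing $\cup_j D_j$ meets $L$ in at least $n$ points), whereas you inline that step by naming the specific chamber containing $\mathrm{int}\,D'$ and invoking $N(L\cap C_i,C_i)\ge n$ directly; your extra care about the structural claim and general position is sound and the paper simply leaves those points implicit.
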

\begin{proof}
Suppose, to the contrary, that $\vert L\cap D^{\prime}\vert < \vert L\cap D^{\prime\prime}\vert $. Then the meridional disc
$(D_i - D^{\prime\prime}) \cup D^{\prime}$ meets $L$ in fewer points than $D_i$. This disc can be pushed slightly off $D_i$ so that
it still meets $L$ in fewer points than $D_i$. But this contradicts Lemma \ref{Missing M-Discs Lemma}.

\end{proof}

\begin{thm} \label{Main Theorem}
Let $L$, $T$, and  $D_0, \ldots D_{m-1}$ be as in the setup above. If any meridional disc in $T$ that misses $D=\cup_{i=0}^{i=m-1} D_i$ intersects $L$ in at least $n$ points, then  $N(L,T)=n$.
\end{thm}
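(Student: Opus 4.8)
The plan is to prove the two inequalities $N(L,T)\le n$ and $N(L,T)\ge n$ separately. The upper bound is immediate: any one of the discs $D_i$ is a meridional disc of $T$ meeting $L$ transversally in exactly $n$ points, so $N(L,T)\le n$. For the lower bound I must show that \emph{every} meridional disc $K$ of $T$ satisfies $\vert L\cap K\vert\ge n$. The hypothesis already gives this whenever $K$ misses $D=\cup_{i=0}^{m-1}D_i$, so the entire content is to reduce a general $K$ to one missing $D$ without increasing the number of intersections with $L$.

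First I would record that the hypothesis makes the two preceding lemmas available. Any interior meridional disc $M$ of a chamber $C_i$ lies in the interior of $C_i$, away from the two bounding discs $D_{i-1}$ and $D_i$, hence is disjoint from $D$; moreover $\partial M$ is essential in the lateral annulus $\partial C_i\cap\partial T$, so $\partial M$ is a meridian of $T$. Thus $M$ is a meridional disc of $T$ missing $D$, and the hypothesis gives $\vert(L\cap C_i)\cap M\vert=\vert L\cap M\vert\ge n$. Taking the minimum over all such $M$ yields $N(L\cap C_i,C_i)\ge n$ for every $i$, which is precisely the standing assumption of Lemma \ref{Missing M-Discs Lemma} and Lemma \ref{Replacement Disc Lemma}. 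So the hypothesis of the theorem and the chamber condition are in fact equivalent, and I may freely invoke those lemmas.

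Next I would put a general meridional disc $K$ in general position with respect to $D$ and to $L$, and arrange that $\partial K$ is disjoint from $\cup_i\partial D_i$. This is possible because $\partial K$ and each $\partial D_i$ are parallel (meridional) essential curves on the torus $\partial T$, so $\partial K$ can be isotoped off them; the isotopy can be taken to be supported in a collar of $\partial T$ chosen to miss the compact interior link $L$, so it leaves $\vert L\cap K\vert$ unchanged and keeps $K$ meridional. After this adjustment $K\cap D$ is a finite disjoint union of simple closed curves in $\operatorname{int}T$, with no arc components. I expect the careful handling of these boundary intersections to be the main technical point, since one must simultaneously keep $\partial K$ essential, remove its intersections with the $\partial D_i$, and avoid disturbing $L$.

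Finally I would run an innermost-circle surgery argument, inducting on the number of curves of $K\cap D$. If $K\cap D=\emptyset$ we are done by hypothesis. Otherwise choose a curve $\gamma$ of $K\cap D$ that is innermost on $K$, bounding a subdisc $D'\subset K$ whose interior misses $D$; then $D'\cap D=\partial D'=\gamma$ lies in some $D_i$. Lemma \ref{Replacement Disc Lemma} applies to $D'$ and gives $\vert L\cap D'\vert\ge\vert L\cap D''\vert$, where $D''$ is the subdisc of $D_i$ bounded by $\gamma$. Replacing $D'$ in $K$ by a copy of $D''$ pushed slightly off $D_i$ produces a new meridional disc $\widetilde K$ with $\partial\widetilde K=\partial K$; since the push is small and $L$ meets $D_i$ transversally, $\vert L\cap\widetilde K\vert=\vert L\cap(K\setminus D')\vert+\vert L\cap D''\vert\le\vert L\cap K\vert$, and because $\gamma$ was innermost the surgery strictly decreases the number of intersection circles with $D$ (the pushed copy of $D''$ sits in a thin collar of $D_i$ disjoint from all the $D_j$). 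Iterating produces a meridional disc $K^{\ast}$ disjoint from $D$ with $\vert L\cap K^{\ast}\vert\le\vert L\cap K\vert$; the hypothesis gives $\vert L\cap K^{\ast}\vert\ge n$, hence $\vert L\cap K\vert\ge n$. As $K$ was an arbitrary meridional disc, $N(L,T)\ge n$, and combined with the upper bound this gives $N(L,T)=n$.
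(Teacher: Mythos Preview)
Your approach is the same as the paper's: get the upper bound from the $D_i$, put a minimal meridional disc $K$ in general position with $D$, and run an innermost-circle induction using Lemma~\ref{Replacement Disc Lemma}. There is, however, a gap in your surgery step. You take $\gamma$ innermost on $K$---as you must, in order to invoke Lemma~\ref{Replacement Disc Lemma}---but $\gamma$ need not be innermost on $D_i$: other curves of $K\cap D_i$ may lie inside $D''$. When that happens $(K\setminus D')\cup D''$ is not embedded; it has double curves where $K\setminus D'$ passes through $\operatorname{int} D''$, and a small push off $D_i$ does not remove those self-intersections. Thus your $\widetilde K$ is not in general a meridional disc, and the induction as written cannot proceed.

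The paper confronts exactly this point: it observes that the surgered object $(K-k)\cup d$ is only a \emph{possibly singular} disc whose singularities consist of disjoint double curves, cuts these apart (citing \cite[Ch.~4]{He04}) to obtain an embedded disc $K'$ meeting $L$ in no more points, and only then pushes off $D_i$ to reduce the number of intersection curves with $D$. With that amendment your argument is complete and coincides with the paper's proof.
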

\begin{proof}
Since the setup implies $N(L,T)\leq n$, it suffices to show $N(L,T)\geq n$. Let $K$ be a meridional disc of $T$ so that $\vert K\cap L\vert $ is minimal. Without loss
of generality, we may assume that   $\partial K \cap (\bigcup_{i=0}^{m-1} \partial D_i) = \emptyset$,
K is in general position with respect to D, and $L\cap K \cap D=\emptyset$. 

{Note that if 
$\partial K \cap (\bigcup_{i=0}^{m-1} \partial D_i) \neq \emptyset$, one can use standard general position techniques to make the intersection empty. For example, use the fact that there is an isotopy of $T$ taking $K$ to a close parallel copy of any particular $D_i$. This isotopy can be feathered or tapered off to the identity away from a small neighborhood of the boundary of $T$ without changing the intersection of $K$ with 
$L$.}

We show by induction on the number of components of $K\cap  D$ that $\vert K\cap L\vert\geq n$. If
$K\cap  D=\emptyset$, then by Lemma \ref{Missing M-Discs Lemma}, $\vert L \cap K\vert \geq n$. Consider a simple closed curve
component $c$ of $K\cap D$ that is innermost on K. Then $c$ bounds a disc $k$ in $K$  and a
disc $d$ in some $D_i$.  Note that $c$ may not be innermost in $D$ among the simple closed curves of $K\cap D$. However, the (possibly) singular disc $(K- k)\,\, \cup\,\, d$ meets $L$ in no more
points than $K$ by Lemma \ref{Replacement Disc Lemma}. The singularities, if any,  of $(K- k)\,\, \cup\,\, d$ consist of disjoint double curves that
can be cut apart to give a non-singular disc $K^{\prime}$ that meets L in no more points than
K does. See \cite[Ch. 4]{He04} for details on cutting apart double curves. By pushing $K^{\prime}$ slightly off $D_i$ we obtain a meridional disc $K^{\prime\prime}$ such that  $\vert K^{\prime\prime}\cap L\vert $ is still minimal and such that  $ K^{\prime\prime}\cap D$  has fewer components than $K\cap  D$. Therefore, by induction, $\vert K^{\prime\prime}\cap L\vert \geq n$ and so $\vert K\cap L\vert \geq n$.
\end{proof}

\begin{cor}
\label{Chamber Corollary}
Let $L$, $T$, and  $D_0, \ldots D_{m-1}$ and  $C_i$ be as in the setup above. If 
{$N(L\cap C_i,C_i)= n$} for each $i$, then $N(L,T)=n$.
\end{cor}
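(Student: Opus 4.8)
The plan is to deduce Corollary \ref{Chamber Corollary} directly from Theorem \ref{Main Theorem} by verifying that its single nontrivial hypothesis holds. The corollary's hypothesis is that $N(L\cap C_i, C_i) = n$ for every chamber, which in particular gives $N(L\cap C_i, C_i) \geq n$ for each $i$. The conclusion of the theorem requires showing that any meridional disc $K$ of $T$ that misses $D = \cup_{i=0}^{m-1} D_i$ intersects $L$ in at least $n$ points. But this is precisely the content of Lemma \ref{Missing M-Discs Lemma}, whose hypothesis is exactly $N(L\cap C_i, C_i) \geq n$ for each $i$. So the first step is simply to invoke the equality hypothesis to extract the inequality $N(L\cap C_i, C_i) \geq n$, which feeds Lemma \ref{Missing M-Discs Lemma}.

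The key observation justifying Lemma \ref{Missing M-Discs Lemma}'s applicability is already recorded in its proof: a meridional disc $K$ of $T$ that is disjoint from all the $D_i$ must lie entirely within the closure of a single chamber $C_i$, and its boundary is essential in the appropriate portion of $\partial C_i$, so $K$ qualifies as an interior meridional disc of that chamber. Hence $\vert L \cap K\vert = \vert (L\cap C_i)\cap K\vert \geq N(L\cap C_i, C_i) \geq n$. With this inequality established for every such $K$, the hypotheses of Theorem \ref{Main Theorem} are fully met, and the theorem yields $N(L,T) = n$ immediately.

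I would therefore write the proof as a two-line chain of implications: the equality $N(L\cap C_i,C_i)=n$ implies $N(L\cap C_i,C_i)\geq n$, which by Lemma \ref{Missing M-Discs Lemma} guarantees that any meridional disc of $T$ missing $D$ meets $L$ in at least $n$ points, whereupon Theorem \ref{Main Theorem} applies verbatim to give $N(L,T)=n$. No new geometric argument is needed; the corollary is a packaging of the theorem under a cleaner, chamber-by-chamber hypothesis that is easier to check in practice.

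The main (and essentially only) subtlety to be careful about is confirming that the full strength of the equality hypothesis, rather than merely the inequality, is not secretly required. The inequality $N(L\cap C_i,C_i)\geq n$ is what drives Lemma \ref{Missing M-Discs Lemma} and hence Theorem \ref{Main Theorem}; the reverse inequality $N(L\cap C_i,C_i)\leq n$ is automatic from the setup by Remark \ref{At Most n Remark}. So the equality is the natural and verifiable hypothesis, but the proof mechanism leans only on the $\geq n$ direction. There is no genuine obstacle here — the work was already done in proving Theorem \ref{Main Theorem} and the supporting lemmas, and the corollary is an essentially immediate specialization.
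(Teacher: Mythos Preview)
Your proposal is correct and follows essentially the same approach as the paper's own proof: observe that any meridional disc of $T$ missing $D$ is an interior meridional disc of some chamber and hence meets $L$ in at least $n$ points, then invoke Theorem \ref{Main Theorem}. The only cosmetic difference is that you explicitly route this observation through Lemma \ref{Missing M-Discs Lemma}, whereas the paper restates its content directly.
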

\begin{proof}
Any meridional disc that misses $\cup_{i=0}^{i=m-1} D_i$ is an interior meridional disc for some chamber, and so intersects $L$ in at least $n$ points. The result now follows from  Theorem \ref{Main Theorem}.
\end{proof}

\begin{cor}
\label{Clasp Corollary}
Let $L$, $T$, and  $D_0, \ldots D_{m-1}$ and  $C_i$ be as in the setup above. If, for each $i$, $C_i\cap L$ consists of $k_i$ clasps and $\ell_i$ spanning arcs where $2k_i+\ell=n$,  then $N(L,T)=n$.
\end{cor}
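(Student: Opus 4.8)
The plan is to reduce everything to Corollary \ref{Chamber Corollary} by showing that the per-chamber geometric index is exactly $n$, i.e.\ that $N(L\cap C_i,C_i)=n$ for each $i$. Remark \ref{At Most n Remark} already supplies the upper bound $N(L\cap C_i,C_i)\leq n$, so the entire content of the argument is the matching lower bound $N(L\cap C_i,C_i)\geq n$. Once this is in hand, Corollary \ref{Chamber Corollary} applies verbatim and yields $N(L,T)=n$. (Note also the evident typo in the hypothesis: $2k_i+\ell=n$ should read $2k_i+\ell_i=n$.)

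To establish the lower bound, I would fix an arbitrary interior meridional disc $M$ of $C_i$ in general position with respect to $L$, and decompose $L\cap C_i=\Gamma_1\cup\cdots\cup\Gamma_{k_i}\cup\sigma_1\cup\cdots\cup\sigma_{\ell_i}$, where the $\Gamma_j$ are the clasps and the $\sigma_j$ are the spanning arcs guaranteed by hypothesis. Since these sub-links are pairwise disjoint, intersection counts add: $|M\cap(L\cap C_i)|=\sum_j|M\cap\Gamma_j|+\sum_j|M\cap\sigma_j|$. The key is that each summand is bounded below for \emph{every} such $M$, not merely on average. For a spanning arc $\sigma_j$, the separation argument of Lemma \ref{Chamber Index Lemma} (the disc $M$ splits $C_i$ into one piece containing $C_t$ and one containing $C_b$, and an arc meeting both ends cannot avoid $M$) gives $|M\cap\sigma_j|\geq 1$. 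For a clasp $\Gamma_j$, its defining property is that its geometric index in $C_i$ equals $2$; because the geometric index is the \emph{minimum} of $|M\cap\Gamma_j|$ over all interior meridional discs, this minimum being $2$ means exactly that every such $M$ satisfies $|M\cap\Gamma_j|\geq 2$. Summing, $|M\cap(L\cap C_i)|\geq 2k_i+\ell_i=n$, and taking the minimum over $M$ gives $N(L\cap C_i,C_i)\geq n$.

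The main (and essentially only) subtle point to get right is the passage from ``$\Gamma_j$ is a clasp, so its geometric index is $2$'' to ``every meridional disc of $C_i$ meets $\Gamma_j$ at least twice.'' This is not a linking or parity computation to be redone here: it is built into the hypothesis, since a minimum equal to $2$ forbids any interior meridional disc from meeting the clasp in $0$ or $1$ points. I would be careful to state this explicitly so that the additivity step is clearly a lower bound valid disc-by-disc rather than only for the optimal disc. Combining the lower bound just obtained with Remark \ref{At Most n Remark} gives $N(L\cap C_i,C_i)=n$ for every $i$, and Corollary \ref{Chamber Corollary} then completes the proof that $N(L,T)=n$.
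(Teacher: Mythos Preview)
Your proposal is correct and follows essentially the same route as the paper: show each chamber has geometric index exactly $n$ by summing the per-component lower bounds (each clasp contributes at least $2$ by definition, each spanning arc at least $1$ by the separation argument), then invoke Corollary~\ref{Chamber Corollary}. The paper's proof is the same argument compressed to two sentences with a reference to Lemma~\ref{Chamber Index Lemma}; your version just makes the disc-by-disc additivity explicit, and you are right to flag the $\ell$ versus $\ell_i$ typo.
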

\begin{proof}
The hypotheses show that any interior meridional disc in $C_i$ intersects $L$ in at least $2k_i+\ell=n$ points. (See Lemma \ref{Chamber Index Lemma}). The result now follows from Corollary \ref{Chamber Corollary}.
\end{proof}

\begin{cor} \label{Complement Corollary}
Let $L$, $T$, and  $D_0, \ldots D_{m-1}$ and  $C_i$ be as in the setup above. Let $L^{\prime}$ be a link in the interior of a solid torus $T^{\prime}$ and let $D^{\prime}_0, \ldots D^{\prime}_{m-1}$ be a cyclically ordered collection of meridional discs in $T$, each intersecting $L^{\prime}$ transversally in $n$ points. 
Let  $C^{\prime}_i$ be the region in $T^{\prime}$ bounded by $D^{\prime}_i$ and $D^{\prime}_{(i+1)mod\ m}$.
If $N(L^{\prime},T^{\prime})=n$, and if $(C_i, C_i\cap L)\cong (C^{\prime}_i, C^{\prime}_i\cap L^{\prime})$ for each $i$, then $N(L,T)=n$.
\end{cor}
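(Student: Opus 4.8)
The plan is to transport the single hypothesis $N(L',T')=n$ through the chamber decomposition and then apply Corollary \ref{Chamber Corollary} to $L$ itself. The driving observation is that the chamber index $N(\,\cdot\,,C)$ is a topological invariant of the pair (chamber, link-in-chamber), provided the homeomorphism respects the splitting of $\partial C$ into the two end discs and the side annulus. So the strategy has three steps: convert $N(L',T')=n$ into chamber information for $L'$, transfer that information to $L$ via the homeomorphisms, and then invoke the corollary.

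First I would show that $N(L',T')=n$ forces $N(L'\cap C'_i,C'_i)=n$ for every $i$. The bound $N(L'\cap C'_i,C'_i)\le n$ is immediate from Remark \ref{At Most n Remark} applied to $L'$, since the discs $D'_i$ meet $L'$ transversally in $n$ points. For the reverse inequality, suppose some $N(L'\cap C'_j,C'_j)<n$; then there is an interior meridional disc $M$ of $C'_j$ with $|L'\cap M|<n$. The key point is that $M$ is genuinely a meridional disc of $T'$: its boundary lies on the side annulus of $C'_j$, namely the sub-annulus of $\partial T'$ between the meridians $\partial D'_{j-1}$ and $\partial D'_j$, and every essential curve there is parallel to those meridians and hence is itself a meridian of $T'$. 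Thus $N(L',T')\le|L'\cap M|<n$, contradicting the hypothesis, and so $N(L'\cap C'_i,C'_i)=n$ for all $i$.

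Next I would use the hypothesis $(C_i,C_i\cap L)\cong(C'_i,C'_i\cap L')$. Because a homeomorphism of chamber pairs carries the side annulus to the side annulus and essential curves to essential curves, it carries interior meridional discs to interior meridional discs and preserves their intersection numbers with the link; taking the minimum over each family gives $N(L\cap C_i,C_i)=N(L'\cap C'_i,C'_i)=n$ for each $i$. With $N(L\cap C_i,C_i)=n$ established for all $i$, Corollary \ref{Chamber Corollary} yields $N(L,T)=n$, as desired.

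The main obstacle is not any deep difficulty but the care needed in the two topological reductions: one must verify that an interior meridional disc of a chamber is genuinely a meridional disc of the ambient solid torus (so that the chamber index bounds the torus index from below), and that the given homeomorphism of pairs really does preserve the class of interior meridional discs. Both rest on the elementary fact that the essential simple closed curves in the side annulus $\partial B^2\times I$ are exactly the meridians of the torus, so I would state this fact explicitly and then let the preservation of minima follow as a formal consequence.
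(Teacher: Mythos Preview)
Your proof is correct and follows the same route as the paper: show $N(L'\cap C'_i,C'_i)=n$ by noting an interior meridional disc of a chamber is a meridional disc of $T'$, transfer this via the pair homeomorphisms to get $N(L\cap C_i,C_i)=n$, and invoke Corollary \ref{Chamber Corollary}. Your version is in fact more careful than the paper's, spelling out the $\le n$ bound from Remark \ref{At Most n Remark} and flagging that the homeomorphism must respect the side-annulus/ends decomposition for the chamber index to transfer.
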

\begin{proof}
For each chamber $C^{\prime}_i$, $N(L^{\prime}\cap C^{\prime}_i, C^{\prime}_i)$ must be at least $n$. Otherwise, $N(L^{\prime}, T^{\prime})$ would be less than $n$. So for each chamber $C_i$, $N(L\cap C_i, C_i)$ must be at least $n$. The result now follows from Corollary \ref{Chamber Corollary}.
\end{proof}

\section{Examples and Applications}\label{Application Section}
We apply the theorem and corollaries from the previous section to compute the geometric index of a number of {new and old} examples to illustrate the power of these results.

{\textbf{Figure \ref{Complicated Links}} We begin with the new example(s) of Figure \ref{Complicated Links}. Divide the outer torus into eight chambers as indicated in Figure \ref{Chambers2}. Each chamber has a collection of spanning arcs, and possibly Whitehead clasps, Square knot clasps, or Antoine clasps. By Lemma \ref{Chamber Index Lemma}, the geometric index of the inner link intersected with the chamber in the chamber is 8. The result now follows from Corollary \ref{Chamber Corollary} or from Corollary \ref{Clasp Corollary}.}

\begin{figure}[ht]
\begin{center}\includegraphics[width=0.6\textwidth]{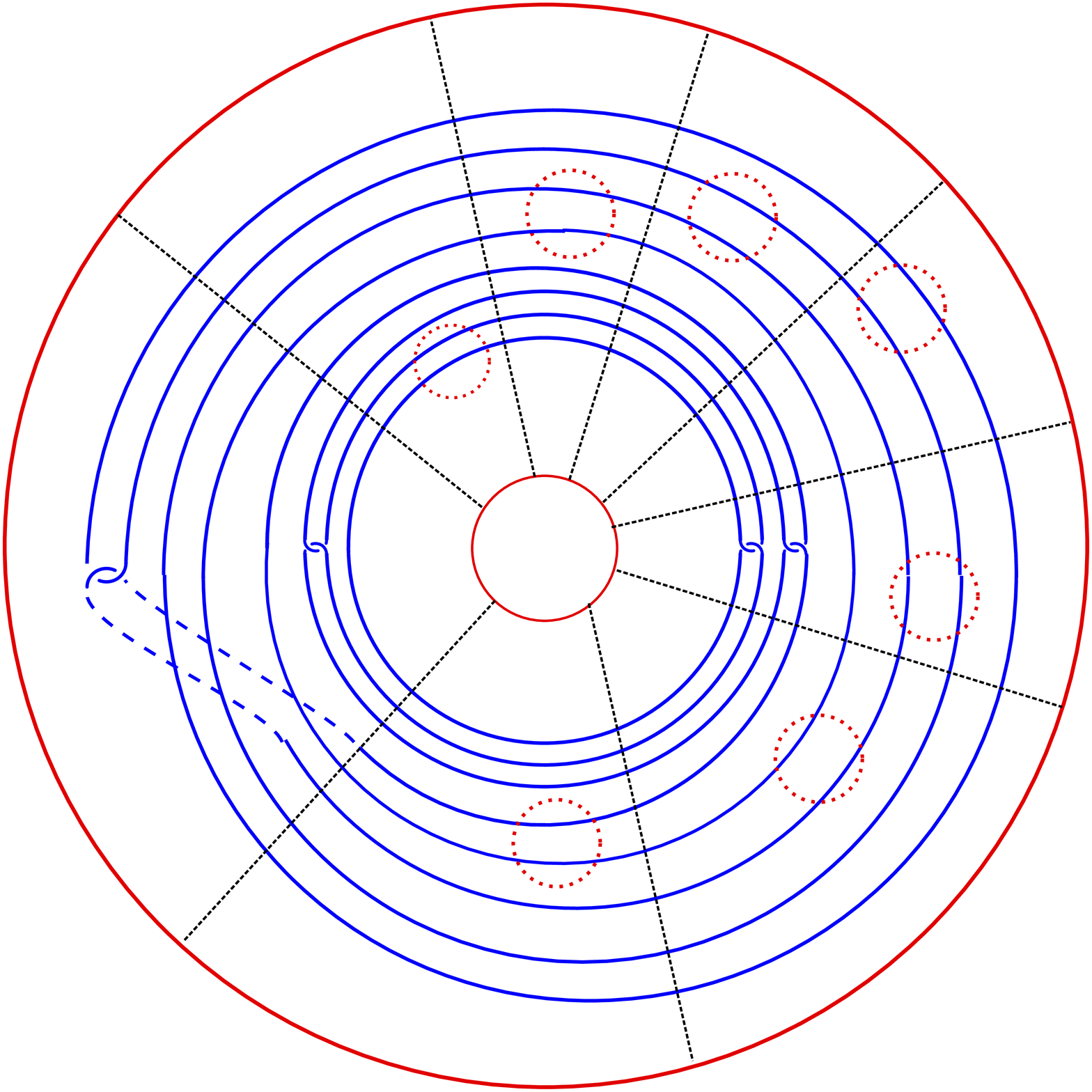}
 \end{center}
  \caption{%
    Chambers for Figure  \ref{Complicated Links} }%
  \label{Chambers2}
\end{figure}

\begin{figure}[hbt]
     \begin{center}
        \subfigure[Antoine Link with discs]{%
            \label{Antoine-D}
            \includegraphics[width=0.31\textwidth]{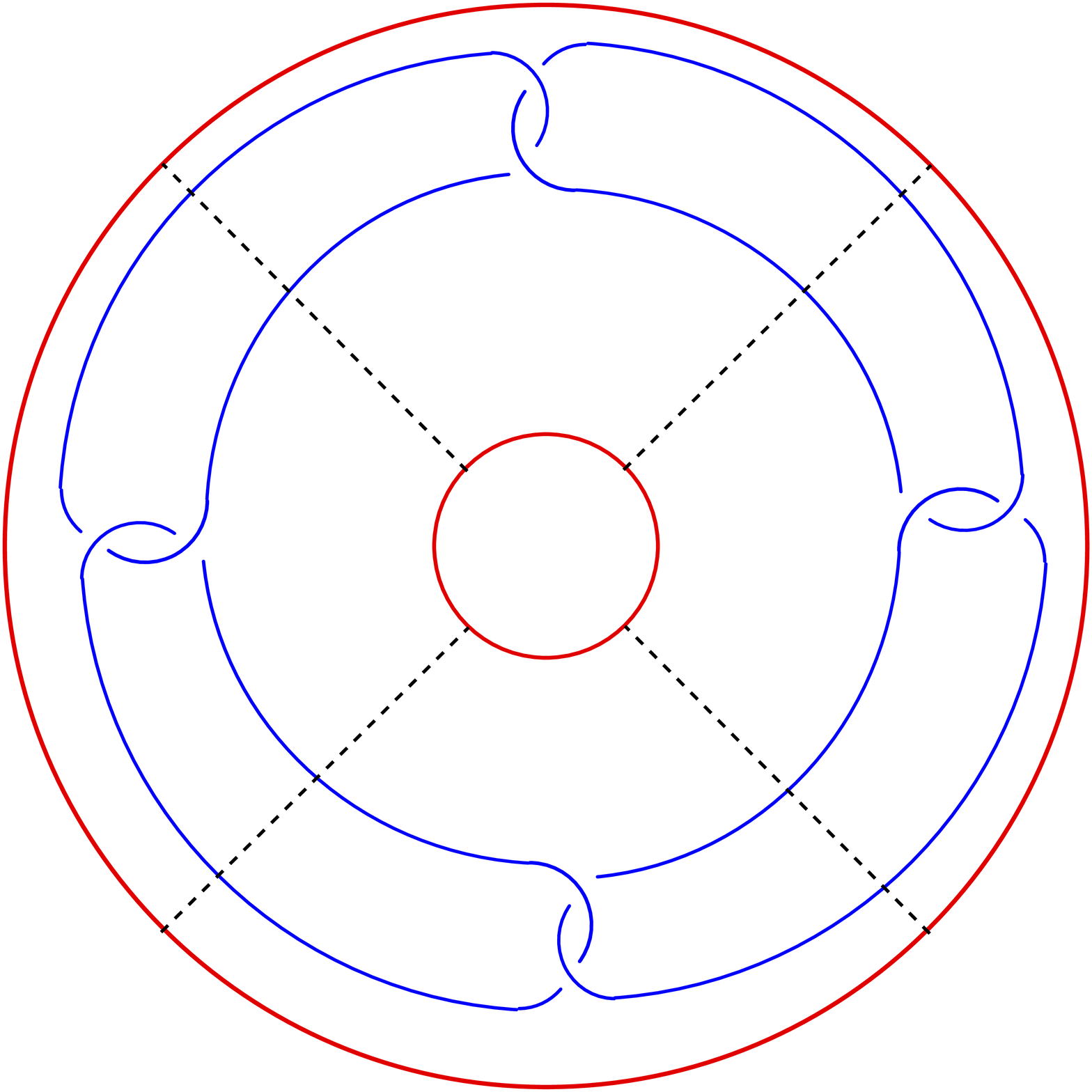}
          }
          \subfigure[Knotted Link with discs]{%
            \label{Knotted-D}
            \includegraphics[width=0.31\textwidth]{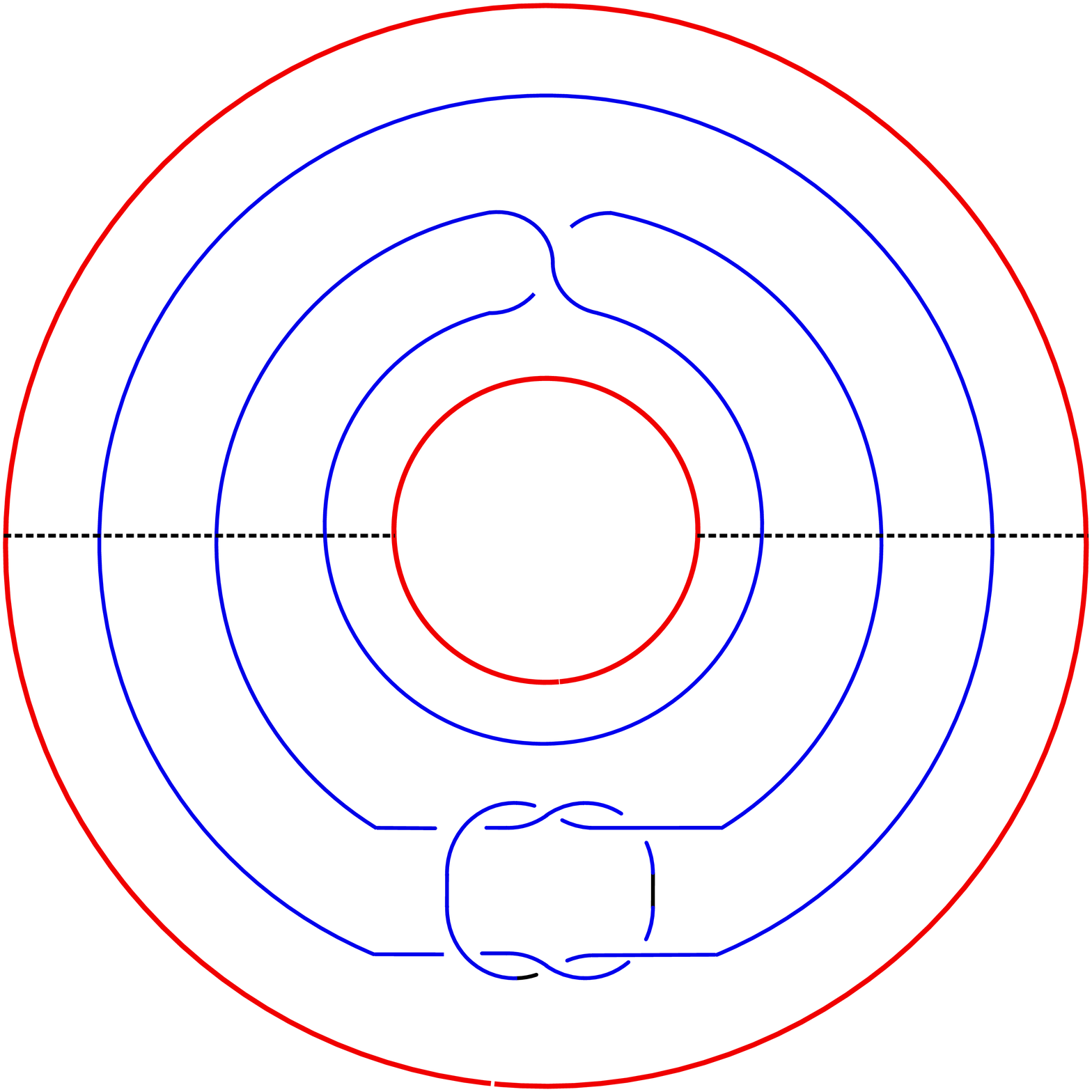}
            }
            \subfigure[McMillan Link with discs]{%
            \label{McM-D}
            \includegraphics[width=0.31\textwidth]{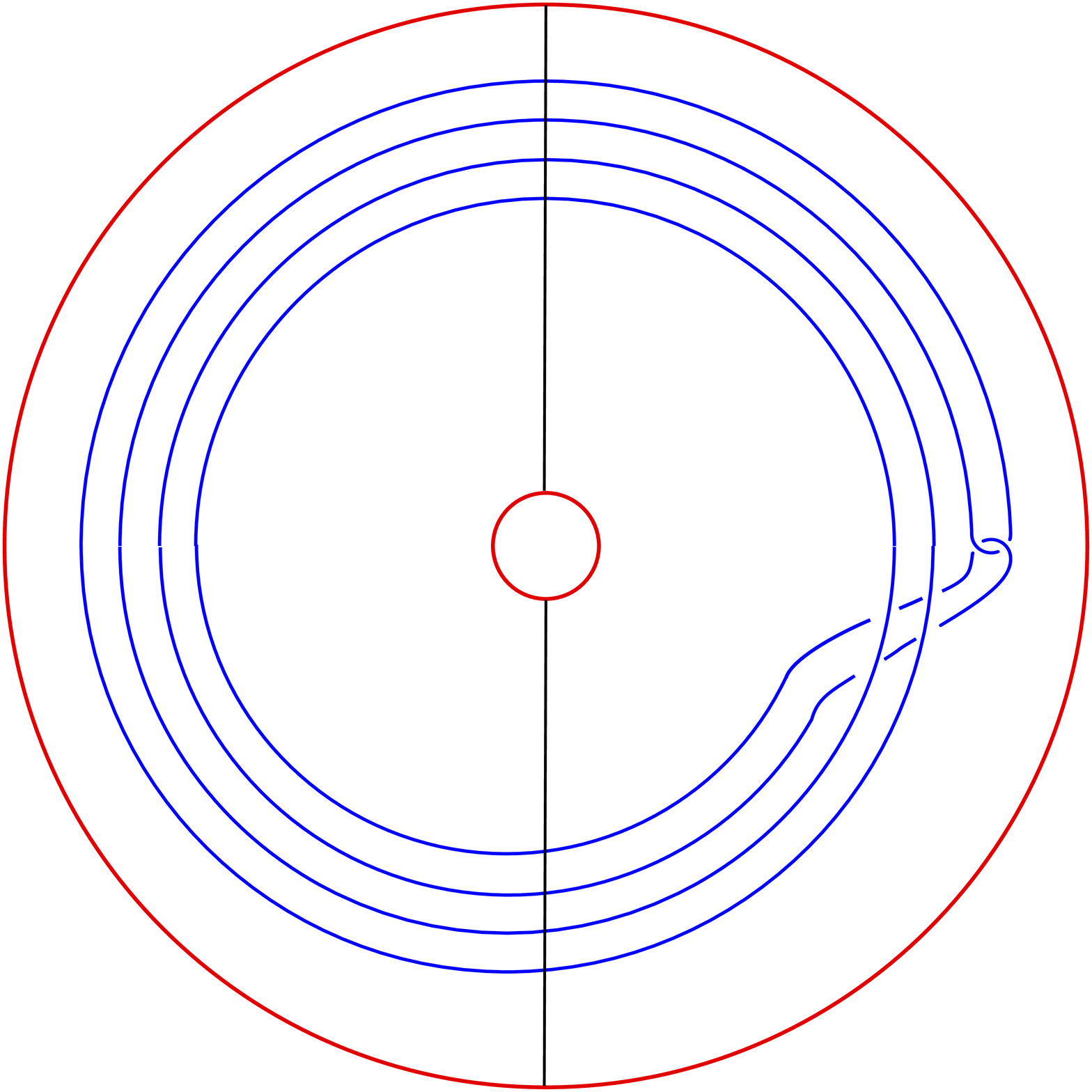}
            }\\
             \subfigure[Shifted Link with discs]{%
            \label{Gabai Shifted}
            \includegraphics[width=0.31\textwidth]{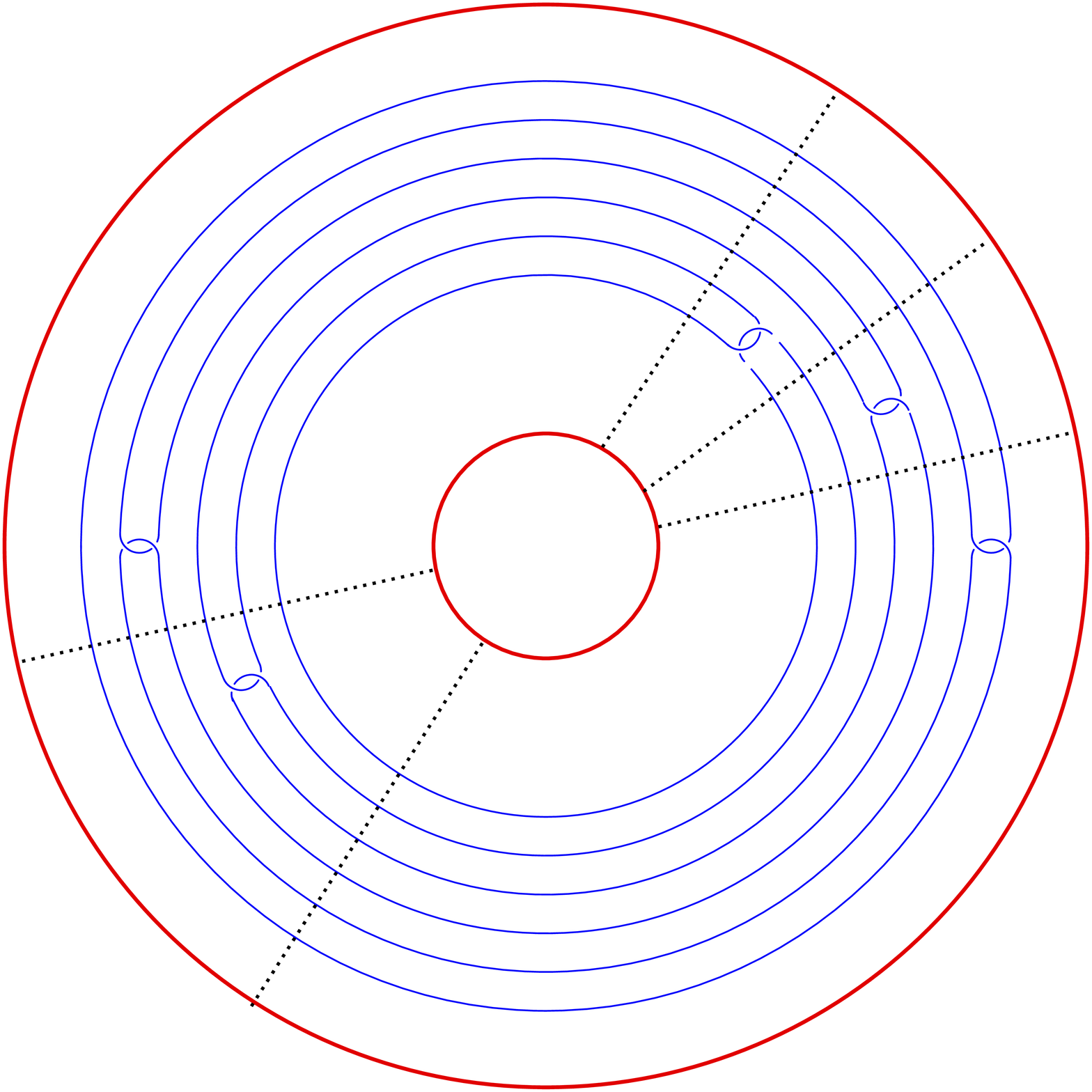}
          }\subfigure[Gabai Link with discs]{%
            \label{Gabai-D}
            \includegraphics[width=0.31\textwidth]{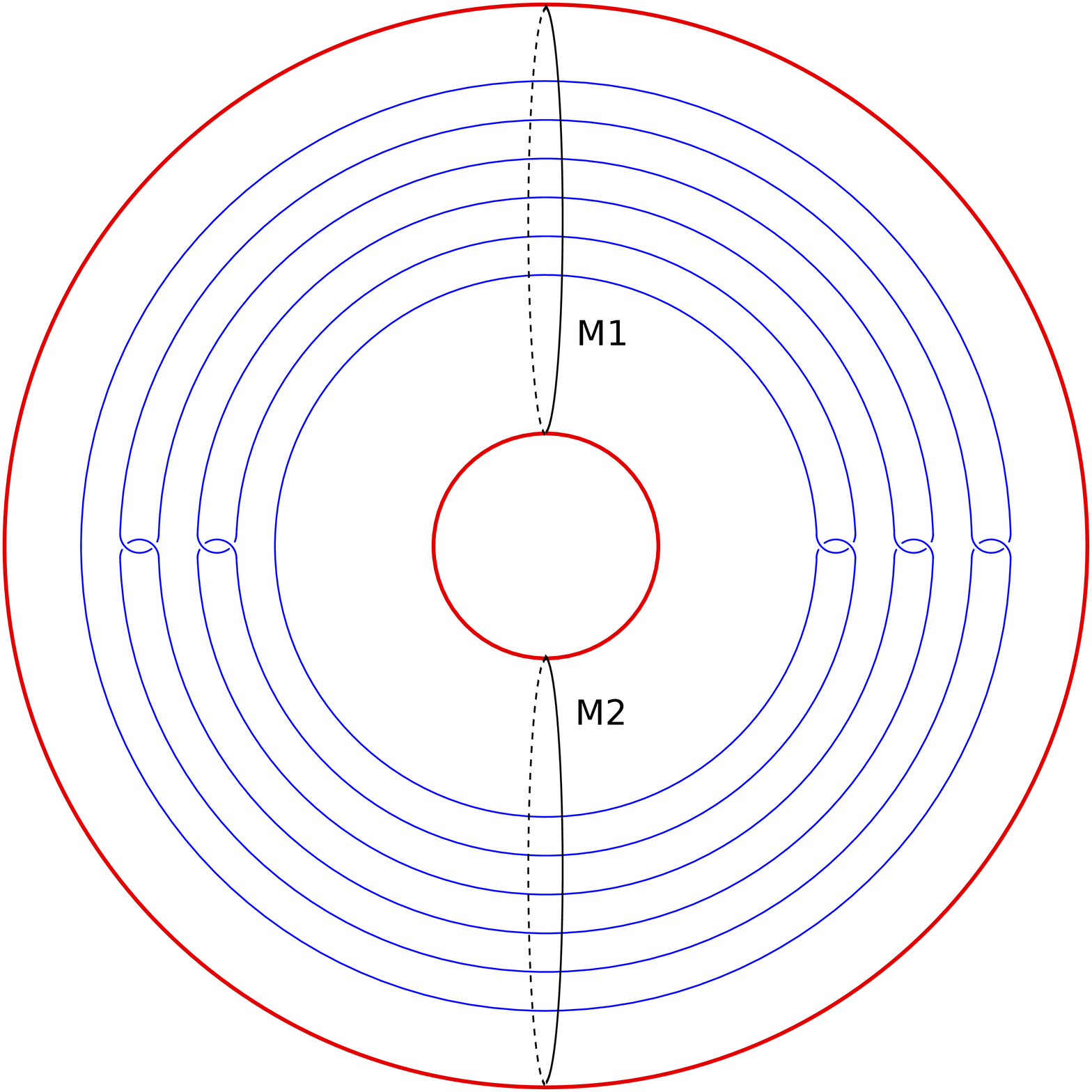}
          }
          \subfigure[Bing and Whitehead Links with \hspace*{7em}discs]{%
            \label{BW-D}
            \includegraphics[width=0.31\textwidth]{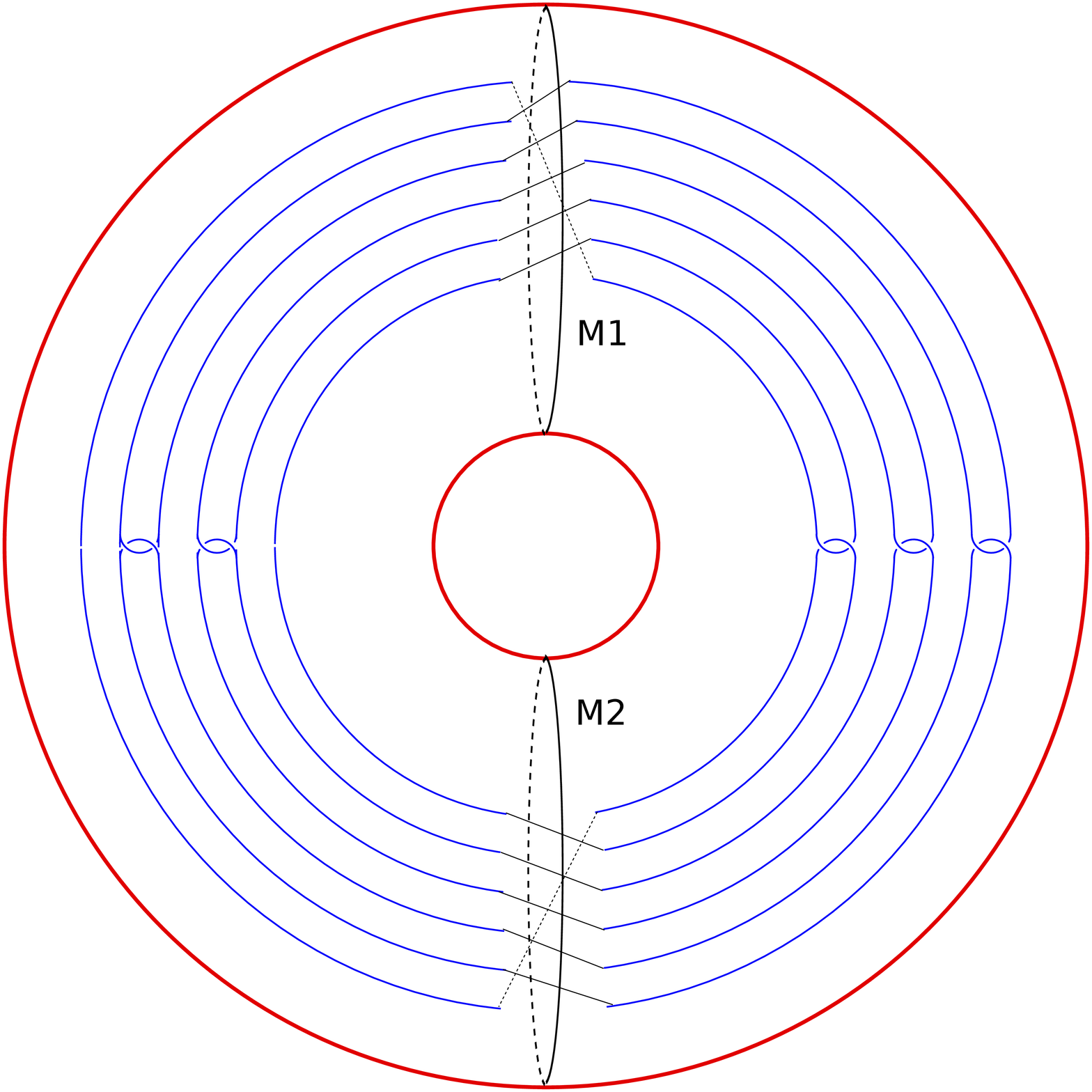}
        }
    \end{center}
    \caption{%
        Links with Added Meridional Discs }%
   \label{G-index}
\end{figure}

\textbf{Figures \ref{Whitehead}, \ref{Bing}, and \ref{Antoine}:} By Lemma 2.5, the geometric index of the link in each of these figures on the outer torus is 0 or 2. So it suffices to show the geometric index is not 0. The outer torus and link in Figure \ref{Antoine} is a covering space of the outer tori and links in Figures \ref{Whitehead} and \ref{Bing}. 
Any meridional disc in the outer torus in Figures \ref{Whitehead} and \ref{Bing} that misses the interior link lifts to a meridional disc in the outer torus in Figure \ref{Antoine} that misses the interior link. So it suffices to show the geometric index of the link in Figure \ref{Antoine} is nonzero.
There is a standard cut and paste argument that shows this. See for example \cite[Section 9]{Dav07}
or \cite[Chapter 3-G]{Rol90}.

An alternate approach using Corollary \ref{Clasp Corollary} is as follows. Divide the outer torus in four chambers as indicated in Figure \ref{Antoine-D}. Each chamber has a single Whitehead clasp in it. By Lemma \ref{Chamber Index Lemma}, the geometric index of the inner link intersected with the chamber in the chamber is 2. The result now follows from Corollary \ref{Chamber Corollary}.

\textbf{Figure \ref{Algebraic2}:} The algebraic index is 2 since the inner curve has winding number 2 in the outer torus. Since the geometric index is greater than or equal to the algebraic index, and since there is clearly a meridional disc that intersects the inner curve twice, the geometric index is 2.

\textbf{Figure \ref{McM4}:} Divide the outer torus into two chambers as indicated in Figure \ref{McM-D}. Each chamber has a Whitehead clasp and two spanning arcs or four spanning arcs. By Lemma \ref{Chamber Index Lemma}, the geometric index of the inner link intersected with the chamber in the chamber is 4. The result now follows from Corollary \ref{Chamber Corollary} or from Corollary \ref{Clasp Corollary}.

\textbf{Figure \ref{Knotted}:} Divide the outer torus into two chambers as indicated in Figure \ref{Knotted-D}. Each chamber has a Whitehead clasp and spanning arc or a Square Knot clasp and spanning arc. By Lemma \ref{Chamber Index Lemma}, the geometric index of the inner link intersected with the chamber in the chamber is 3. The result now follows from Corollary \ref{Chamber Corollary} or from Corollary \ref{Clasp Corollary}.

\textbf{Figure  \ref{Gabai6}:} In Figure \ref{Gabai Shifted}, we have slightly shifted the inner link by a homeomorphism and divided the outer torus into 5 chambers. The inner link, intersected with each chamber, consists of a Whitehead clasp and four spanning arcs. By Lemma \ref{Chamber Index Lemma}, the geometric index of the inner link intersected with the chamber in the chamber is 6. The result now follows from Corollary \ref{Chamber Corollary} or from Corollary \ref{Clasp Corollary}.

Alternately, consider Figures \ref{Gabai-D} and \ref{BW-D}. Corollary \ref{Complement Corollary} applies directly since the complements of the two indicated meridional discs are homeomorphic. Figure \ref{BW-D} consists of two Bing links and a Whitehead link, so the geometric index of the inner link in the outer torus is 6. By Corollary \ref{Complement Corollary}, the geometric index of the inner link in the outer torus in Figure \ref{Gabai-D} is also 6.

\section{Acknowledgements}

{The authors would like to thank the referee for helpful suggestions and for pointing out useful clarifications to include.} The authors were supported in part by the Slovenian Research Agency grant BI-US/15-16-029. 
The second author was supported in part by the National Science Foundation grant DMS0453304.
The second and fourth authors were supported in part by the National Science Foundation grant DMS0707489. 
The third author was supported in part by the Slovenian Research Agency grants P1-0292 and J1-6721.

\begin{bibdiv}
\begin{biblist}

\bib{AS89}{article}{
   author={Ancel, Fredric D.},
   author={Starbird, Michael P.},
   title={The shrinkability of Bing-Whitehead decompositions},
   journal={Topology},
   volume={28},
   date={1989},
   number={3},
   pages={291--304},
   issn={0040-9383},
   review={\MR{1014463 (90g:57014)}},
   doi={10.1016/0040-9383(89)90010-4},
}

\bib{AW00}{article}{
  author={Andrist, Kathryn B.},
   author={Wright, David G.},
   title={On computing the geometric index},
   journal={Proceedings of the seventeenth annual workshop in geometric topology (Colorado College, June 15-17, 2000), F. Tinsley, Ed., Colorado College, Colorado Springs, 2000, pp. 35--38},
   }

\bib{Bro61}{article}{
   author={Brown, Morton},
   title={The monotone union of open $n$-cells is an open $n$-cell},
   journal={Proc. Amer. Math. Soc.},
   volume={12},
   date={1961},
   pages={812--814},
   issn={0002-9939},
   review={\MR{0126835 (23 \#A4129)}},
}

\bib{Dav07}{book}{
   author={Daverman, Robert J.},
   title={Decompositions of manifolds},
   note={Reprint of the 1986 original},
   publisher={AMS Chelsea Publishing, Providence, RI},
   date={2007},
   pages={xii+317},
   isbn={978-0-8218-4372-7},
   review={\MR{2341468 (2008d:57001)}},
}

\bib{DV09}{book}{
   author={Daverman, Robert J.},
   author={Venema, Gerard A.},
   title={Embeddings in manifolds},
   series={Graduate Studies in Mathematics},
   volume={106},
   publisher={American Mathematical Society, Providence, RI},
   date={2009},
   pages={xviii+468},
   isbn={978-0-8218-3697-2},
   review={\MR{2561389 (2011g:57025)}},
}

\bib{Gab11}{article}{
   author={Gabai, David},
   title={The Whitehead manifold is a union of two Euclidean spaces},
   journal={J. Topol.},
   volume={4},
   date={2011},
   number={3},
   pages={529--534},
   issn={1753-8416},
   review={\MR{2832566 (2012i:57037)}},
   doi={10.1112/jtopol/jtr010},
}

\bib{GRW17}{article}{
   author={Garity, Dennis J.}
   author={Repov{\v{s}}, Du{\v{s}}an D.}
   author={Wright, David G.}
   title={Contractible 3-manifolds and the Double 3-Space Property}
   journal={Trans. Amer. Math. Soc.}
   date = {2017}
   doi={10.1090/tran/7035 }
   }
\bib{GRWZ11}{article}{
   author={Garity, Dennis},
   author={Repov{\v{s}}, Du{\v{s}}an},
   author={Wright, David},
   author={{\v{Z}}eljko, Matja{\v{z}}},
   title={Distinguishing Bing-Whitehead Cantor sets},
   journal={Trans. Amer. Math. Soc.},
   volume={363},
   date={2011},
   number={2},
   pages={1007--1022},
   issn={0002-9947},
   review={\MR{2728594 (2011j:54034)}},
   doi={10.1090/S0002-9947-2010-05175-X},
}

\bib{He04}{book}{
   author={Hempel, John},
   title={3-manifolds},
   note={Reprint of the 1976 original},
   publisher={AMS Chelsea Publishing, Providence, RI},
   date={2004},
   pages={xii+195},
   isbn={0-8218-3695-1},
   review={\MR{2098385 (2005e:57053)}},
}

\bib{McM62}{article}{
   author={McMillan, D. R., Jr.},
   title={Some contractible open $3$-manifolds},
   journal={Trans. Amer. Math. Soc.},
   volume={102},
   date={1962},
   pages={373--382},
   issn={0002-9947},
   review={\MR{0137105 (25 \#561)}},
}

\bib{Mye88}{article}{
   author={Myers, Robert},
   title={Contractible open $3$-manifolds which are not covering spaces},
   journal={Topology},
   volume={27},
   date={1988},
   number={1},
   pages={27--35},
   issn={0040-9383},
   review={\MR{935526 (89c:57012)}},
   doi={10.1016/0040-9383(88)90005-5},
}

\bib{Mye99a}{article}{
   author={Myers, Robert},
   title={Contractible open $3$-manifolds which non-trivially cover only
   non-compact $3$-manifolds},
   journal={Topology},
   volume={38},
   date={1999},
   number={1},
   pages={85--94},
   issn={0040-9383},
   review={\MR{1644087 (99g:57022)}},
   doi={10.1016/S0040-9383(98)00004-4},
}

\bib{Rol90}{book}{
   author={Rolfsen, Dale},
   title={Knots and links},
   series={Mathematics Lecture Series},
   volume={7},
   note={Corrected reprint of the 1976 original},
   publisher={Publish or Perish, Inc., Houston, TX},
   date={1990},
   pages={xiv+439},
   isbn={0-914098-16-0},
   review={\MR{1277811 (95c:57018)}},
}

\bib{RS82}{book}{
   author={Rourke, Colin P.},
   author={Sanderson, Brian J.},
   title={Introduction to piecewise-linear topology},
   series={Springer Study Edition},
   note={Reprint},
   publisher={Springer-Verlag, Berlin-New York},
   date={1982},
   pages={viii+123},
   isbn={3-540-11102-6},
   review={\MR{665919 (83g:57009)}},
}

\bib{Sch53}{article}{
   author={Schubert, Horst},
   title={Knoten und Vollringe},
   language={German},
   journal={Acta Math.},
   volume={90},
   date={1953},
   pages={131--286},
   issn={0001-5962},
   review={\MR{0072482 (17,291d)}},
}

\bib{Whi35}{article}{
   author={Whitehead, J. H. C.},
   title={A certain open manifold whose group is unity},
   journal={Quart. J. Math.},
   volume={6},
   date={1935},
   number={6},
   pages={268--279},
}

\bib{Wri89}{article}{
   author={Wright, David G.},
   title={Bing-Whitehead Cantor sets},
   journal={Fund. Math.},
   volume={132},
   date={1989},
   number={2},
   pages={105--116},
   issn={0016-2736},
   review={\MR{1002625 (90d:57020)}},
}

\bib{Wri92}{article}{
   author={Wright, David G.},
   title={Contractible open manifolds which are not covering spaces},
   journal={Topology},
   volume={31},
   date={1992},
   number={2},
   pages={281--291},
   issn={0040-9383},
   review={\MR{1167170 (93f:57004)}},
   doi={10.1016/0040-9383(92)90021-9},
}

\end{biblist}
\end{bibdiv}

\end{document}